\documentclass[10pt]{article}

\usepackage{geometry}
 \geometry{
 letterpaper,
 total={150mm,215mm},
 left=30mm,
 top=25mm,
 }

\RequirePackage[OT1]{fontenc}
\RequirePackage{amsthm,amsmath,amssymb}
\RequirePackage[colorlinks,citecolor=blue,urlcolor=blue]{hyperref}
\usepackage{stmaryrd}
\usepackage[sans]{dsfont}
\usepackage{authblk}


\usepackage[english]{babel}
\usepackage{latexsym}
\usepackage{bbm}
\usepackage[mathcal]{euscript}
\usepackage{graphicx}
\usepackage{pstricks}
\usepackage{pstricks-add}
\usepackage{pst-plot}
\usepackage{enumerate}
\usepackage{multicol}
\usepackage{subfigure}
\usepackage{mathrsfs}
%

\newcommand{\e}{\varepsilon}   
\newcommand{\F}{\mathcal{F}}    
 
  			


\newcommand{\ind}{\mathbbm{1}} 


    %
\newcommand{\NN}{\mathbb{N}}
    %
    %


\newcommand{\RR}{\mathbb{R}}    


\newcommand{\VS}{\mathscr{V}}
\newcommand{\x}{\mathbf{x}}







\numberwithin{equation}{section}
\theoremstyle{plain}
\newtheorem{theorem}{Theorem}[section]
\newtheorem{lemma}[theorem]{Lemma}
\newtheorem{proposition}[theorem]{Proposition}
\newtheorem{corollary}[theorem]{Corollary}
\newtheorem{remark}{Remark}

\theoremstyle{definition}
\newtheorem{definition}{Definition}[section]


\begin{document}

\title{Approximative compactness in Böchner spaces}
\author{Guillaume Grelier\thanks{Departamento de Matemáticas, Universidad de Murcia, 
Campus de Espinardo, 30100 Espinardo, Murcia, Spain email: g.grelier@um.es}
\& Jaime San Mart\'in\thanks{CMM-DIM;  Universidad de Chile; UMI-CNRS 2807; 
Casilla 170-3 Correo 3 Santiago; Chile. email: jsanmart@dim.uchile.cl}}

\maketitle

\begin{abstract}
For any $p\in[1,\infty)$, we prove that the set of simple functions taking at most $k$ different values is proximinal in B\"ochner spaces $L^p(X)$
whenever $X$ is a dual Banach space with $w^*$-sequentially compact unit ball. With additional properties on $X$ and its norm, we show these sets 
are approximatively $w^*$-compact for $p\in(1,\infty)$ and even approximatively norm-compact under stronger hypothesis.
\end{abstract}

\noindent\emph{\bf Key words:} Proximinal, Approximatively compact, B\"ochner spaces.

\noindent\emph{\bf MSC:} Primary 28C20, 41A30, 46G12.

\section{Introduction}

In this paper, we study the approximation of measurable functions
by simple functions taking at most $k$ values for $k\ge 1$. This
problem has important consequences in multiple applications, 
where for example, one seeks for reduction of dimensionality, 
among many others. \\

We consider a measure space $(\Omega,\F, \mu)$ and a Banach space $X$. The set of simple functions will 
be denoted by $\mathcal S(\Omega,\F,\mu,X)$ (or simply $\mathcal S(X)$ if the measure space is clear from the context), that is 
$$
\mathcal S(\Omega,\F,\mu,X)=\left\{\sum\limits_{i=1}^n x_i \ind_{A_i}\ :\ n\geq 1,\ x_1,...,x_n\in X,\ A_1,...,A_n\in\F\right\}.
$$ 
We recall that a function $f:\Omega\to X$ is strongly measurable if it is a pointwise 
limit of a sequence of simple functions. The definition of the Bochner spaces is the following:  
$$
L^p(\Omega,\F,\mu,X)=\left\{f:\Omega\to X\ :\ f\text{ is strongly measurable and }\int_\Omega\|f\|^pd\mu<\infty\right\}\ \text{ if } 1\le p<\infty,
$$
$$
L^\infty(\Omega,\F,\mu,X)=\Big\{f:\Omega\to X\ :\ f\text{ is strongly measurable such that }\exists r>0\ \mu(\{\|f\|>r\})=0\Big\}.
$$ 
Endowed with the norm defined by
\begin{equation}
\|f\|_p=
    \begin{cases}
        \left(\int_\Omega\|f\|^pd\mu\right)^{1/p} & \text{if } 1\le p<\infty\\
       \inf\left\{r\geq 0\ :\ \mu(\{\|f\|>r\}\right)=0 & \text{if } p=\infty,
    \end{cases}
\end{equation}
$L^p(\Omega,\F,\mu,X)$ becomes a Banach spaces. Again, if no confusion is possible, we just write $L^p(X)$ 
instead of $L^p(\Omega,\F,\mu,X)$. For more information about Bochner spaces, we refer the reader to \cite{Diestel}.\\

In what follows, for any $1\leq p\leq\infty$ and $k\ge 1$, we denote by 
$\mathscr{G}_{p,k}(\Omega,\F, \mu,X)$, or simply
$\mathscr{G}_{p,k}(X)$, the set of simple functions given by
$$
\mathscr{G}_{p,k}(X)=\left\{\sum\limits_{i=1}^l x_i \ind_{A_i}\in L^p(\Omega,\F,\mu) :\ l\le k,\
\{A_i\}_{1\le i\le l}\subset\F \hbox{ partition of } \Omega,\ x_1,...,x_l\in X \right\}.
$$
If $f\in L^p(\Omega,\F,\mu,X)$, the distance between $f$ and $\mathscr{G}_{p,k}(X)$ is 
denoted by $\mathscr D_{p,k}(f)$, that is 
$$
\mathscr D_{p,k}(f)=\inf_{g\in\mathscr{G}_{p,k}}\|f-g\|_p.
$$ 

Before stating our main results, we recall some notions from approximation theory. Let $Z$ be a
Banach space and let $K$ be a subset of $Z$. 
The \textit{metric projection} on $K$ is the multi-valued mapping 
$P_K:Z\rightrightarrows K$ defined by $P_K(z)=\{w\in K\ :\ \|z-w\|=d(z,K)\}$ (where $d(A,B)$ is the 
distance between two subsets $A$ and $B$ of $Z$). If $z\in Z$ and if $(w_n)_n\subset K$ is a sequence such that 
$\|z-w_n\|\to d(z,K)$, we say that $(w_n)_n$ is a \textit{minimizing sequence for $z$} and if $w\in K$ is such that 
$\|z-w\|=d(z,K)$, we say that $w$ is a \textit{minimizer} for $z$. 
Consider $\tau$ a regular mode of convergence (resp. sequential convergence) We say that:
\begin{enumerate}
    \item[(a)] $K$ is \textit{proximinal} if $P_K(z)$ is not empty for all $z\in Z$;
    \item[(b)] $K$ is \textit{Chebyshev} if $P_K(z)$ is a singleton for all $z\in Z$;
    \item[(c)] $K$ is \textit{approximatively $\tau$-compact} (resp. \textit{approximatively sequentially $\tau$-compact}) 
    if for any $z\in Z$, any minimizing sequence for $z$ admits a $\tau$-convergent subnet (resp. subsequence) to a point in $K$.
\end{enumerate}
For the definition of regular mode of convergence and for more information about approximatively compactness, we refer the reader to \cite{Deutsch}.\\

In our case if $g\in\mathscr{G}_{p,k}(X)$ is such that $\|f-g\|_p=\mathscr D_{p,k}(f)$, we say that $g$ is a \textbf{minimizer} for $f$ in 
$\mathscr{G}_{p,k}(X)$.
A sequence $(g_n)_n\subset\mathscr{G}_{p,k}(X)$ is called a \textbf{minimizing sequence} for $f$ if 
$$
\lim\limits_{n\to \infty} \|f-g_n\|_p=\mathscr D_{p,k}(f).
$$

The main results of this article are the following two theorems, which generalize the results in \cite{G-JSM} for real valued functions.

\begin{theorem}
\label{the:mainresult I}
Let $(\Omega,\F,\mu)$ be a measure space, $p\in[1,+\infty)$, $k\geq 1$ and $X$ be a dual Banach space with $w^*$-sequentially compact unit ball. Then $\mathscr{G}_{p,k}(X)$ is proximinal.
\end{theorem}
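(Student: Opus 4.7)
My plan is to recast proximinality of $\mathscr{G}_{p,k}(X)$ as the existence of optimal ``centers'' in the spirit of $k$-means, and then extract a limit via the $w^*$-sequential compactness of the unit ball of $X$. Given centers $(x_1,\dots,x_k)\in X^k$, form the Voronoi partition $\{A_i\}_{i=1}^k$ of $\Omega$ (with a measurable tie-breaking rule among the minimizers of $i\mapsto\|f(\omega)-x_i\|$) and set $g=\sum_i x_i\ind_{A_i}$. Then $g\in\mathscr{G}_{p,k}(X)$ and among all elements of $\mathscr{G}_{p,k}(X)$ built from the same prescribed centers, $g$ minimizes the $p$-distance to $f$, so
\[
\|f-g\|_p^p=\int_\Omega\min_{1\le i\le k}\|f(\omega)-x_i\|^p\,d\mu(\omega)=:F(x_1,\dots,x_k).
\]
Thus $\mathscr D_{p,k}(f)^p=\inf_{X^k}F$, and the theorem reduces to showing $F$ attains its infimum on $X^k$.

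Pick a minimizing sequence $(x_1^{(n)},\dots,x_k^{(n)})_n$ for $F$; note that $F(0,\dots,0)=\|f\|_p^p$ is a feasible upper bound. After passing to a subsequence, split indices into $I=\{i:\sup_n\|x_i^{(n)}\|<\infty\}$ and $I^c=\{i:\|x_i^{(n)}\|\to\infty\}$. First, $I\neq\emptyset$: otherwise, for any $R>0$, $\min_i\|f(\omega)-x_i^{(n)}\|\to\infty$ pointwise on $\{\|f\|<R\}$, and Fatou would force $F(x^{(n)})\to\infty$, contradicting minimality. Fix $i^*\in I$ with $\|x_{i^*}^{(n)}\|\le C$. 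For each $i\in I^c$, on the Voronoi cell $B_i^{(n)}$ of $x_i^{(n)}$,
\[
\|x_i^{(n)}\|-\|f\|\le\|f-x_i^{(n)}\|\le\|f-x_{i^*}^{(n)}\|\le\|f\|+C,
\]
so $\|f\|\ge(\|x_i^{(n)}\|-C)/2$ on $B_i^{(n)}$; since $f\in L^p$, Markov's inequality gives $\mu(B_i^{(n)})\to 0$. An absolute-continuity estimate then shows that the reduced functional $F_I(y_I)=\int\min_{i\in I}\|f-y_i\|^p\,d\mu$ satisfies $F_I(x_I^{(n)})-F(x^{(n)})\to 0$, so $(x_i^{(n)})_{i\in I}$ is minimizing for $F_I$ and $\inf F_I=\inf F$.

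The centers $(x_i^{(n)})_{i\in I}$ being bounded, by $w^*$-sequential compactness I pass to a further subsequence with $x_i^{(n)}\xrightarrow{w^*}x_i$ for each $i\in I$. Since the norm on the dual space $X$ is $w^*$-sequentially lower semicontinuous, $\|f(\omega)-x_i\|^p\le\liminf_n\|f(\omega)-x_i^{(n)}\|^p$ pointwise for each $i\in I$. A pigeonhole argument on which (finite) index realizes the pointwise minimum promotes this to
\[
\min_{i\in I}\|f(\omega)-x_i\|^p\le\liminf_n\min_{i\in I}\|f(\omega)-x_i^{(n)}\|^p,
\]
and Fatou yields $F_I(x_I)\le\liminf_n F_I(x_I^{(n)})=\inf F$. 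Padding $(x_i)_{i\in I}$ to a $k$-tuple and using the associated Voronoi simple function produces a minimizer in $\mathscr{G}_{p,k}(X)$.

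The main technical obstacle is the reduction to bounded centers in the second step: ensuring that any ``diverging'' cluster in a minimizing sequence can be harmlessly absorbed by a bounded one. The mechanism is that $L^p$-integrability of $f$ forces diverging Voronoi cells to have vanishing measure, which underlies the cost bookkeeping. Once the sequence is bounded, the remaining step is a standard weak-$*$ lower-semicontinuity plus Fatou argument, made possible precisely by the hypothesis that $X$ is a dual with $w^*$-sequentially compact unit ball.
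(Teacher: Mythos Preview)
Your proof is correct and shares the paper's core mechanism: reduce to bounded centers, extract $w^*$-limits using the hypothesis on $X$, and conclude via $w^*$-lower semicontinuity of the norm together with Fatou's lemma. The pigeonhole step for $\min_{i\in I}\|f(\omega)-x_i\|^p\le\liminf_n\min_{i\in I}\|f(\omega)-x_i^{(n)}\|^p$ is fine (pass to a subsequence realizing the $\liminf$ of the minimum, then to a further one along which a fixed index achieves the minimum, and use lower semicontinuity there), and the absolute-continuity estimate you invoke to show $F_I(x_I^{(n)})-F(x^{(n)})\to0$ indeed holds in general measure spaces since $(\|f\|+C)^p\in L^1$ and $\mu(B^{(n)})\to0$.

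The organizational difference is worth noting. The paper works throughout with simple functions and carries the original partitions $A_{i,n}$: it first replaces a minimizing sequence by a uniformly bounded one (Proposition~\ref{pro:bounded}), then after passing to $w^*$-limits $x_i$ it compares $h_n=\sum_i x_{i,n}\ind_{A_{i,n}}$ with $g_n=\sum_i x_i\ind_{A_{i,n}}$ via Fatou to see that $(g_n)$ is still minimizing, and finally invokes a separate lemma (Proposition~\ref{pro:Voronoi_form}) to convert a minimizing sequence with fixed centers into a Voronoi-form minimizer. Because the paper's Fatou step needs an integrable lower bound, it treats the cases $\mu(\Omega)<\infty$ and $\mu(\Omega)=\infty$ separately, truncating in the infinite case to the finite-measure sets $\{\|f\|\ge\varepsilon\}$. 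Your reformulation via the $k$-means functional $F(x_1,\dots,x_k)=\int\min_i\|f-x_i\|^p\,d\mu$ front-loads the Voronoi step, so you only ever track centers; this makes the Fatou argument a one-liner on a nonnegative integrand and removes the need for a separate infinite-measure case. The price is that you must verify $\mathscr D_{p,k}(f)^p=\inf_{X^k}F$ and that the Voronoi function built from the limiting centers actually lies in $L^p(X)$, but both follow immediately once $F$ is finite at the limit. Overall, the two arguments are equivalent in substance; yours is a bit more streamlined.
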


Note that the class of dual Banach spaces with $w^*$-sequentially compact unit ball is very large. 
In fact, it includes dual spaces of a separable space (and more generally of a 
Gâteaux differentiability space, see Theorem 2.1.2 in \cite{Fabian}) and reflexive spaces for example. If $X$ is a finite dimensional Banach space, the previous result is also true for $p=\infty$ (see Remark \ref{finite_dim}). \\

The Radon-Nikodym property (in short, RNP) is a well-known property of Banach spaces. For more information about it, we refer the reader to \cite{Bourgin} for geometric characterizations of the RNP (see Theorem 2.3.6) and to \cite{Pisier} for characterizations in terms of martingales (see Theorem 2.9). If $X$ is a Banach space, it is worth pointing out that $X^*$ has the RNP if and only if $X$ is a Asplund space (see Theorem 5.7 in \cite{Phelps}).

\begin{theorem}
\label{the:mainresult II}
Let $(\Omega,\F,\mu)$ be a measure space, $p\in(1,+\infty)$, $k\ge 1$ and $X$ be the dual space of a Banach space with the RNP
and assume that the unit ball of $X$ is $w^*$-sequentially compact. Suppose that the norm 
of $X$ is G\^ateaux differentiable. Then $\mathscr{G}_{p,k}(X)$ is approximatively $w^*$-compact.
\end{theorem}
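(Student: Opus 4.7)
The plan is to extract a $w^*$-convergent subnet of a minimizing sequence in $L^p(X) \cong (L^q(Y))^*$, identify its limit as a potentially ``fuzzy'' convex combination of a few elements of $X$, and then use the minimizing property together with convexity to sharpen it back to an element of $\mathscr{G}_{p,k}(X)$. Let $(g_n)_n \subset \mathscr{G}_{p,k}(X)$ be a minimizing sequence for $f \in L^p(X)$. The bound $\|g_n\|_p \le \|f\|_p + \|f-g_n\|_p$ together with the isometric duality $L^p(X) = L^p(Y^*) \cong (L^q(Y))^*$ (a standard consequence of $Y$ having the RNP for $p \in (1,\infty)$, with $q$ the conjugate exponent) allows Banach--Alaoglu to produce a $w^*$-convergent subnet $g_{n_\alpha} \xrightarrow{w^*} g \in L^p(X)$. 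By $w^*$-lower semi-continuity of the dual norm,
$$\|f-g\|_p \;\le\; \liminf_\alpha \|f-g_{n_\alpha}\|_p \;=\; \mathscr D_{p,k}(f).$$
Since the reverse inequality is automatic once $g \in \mathscr{G}_{p,k}(X)$, it suffices to show $g \in \mathscr{G}_{p,k}(X)$.

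Write $g_n = \sum_{i=1}^k x_{i,n}\ind_{A_{i,n}}$ (padding with $x_{i,n} = 0$ if needed). A preliminary truncation handles indices with $\|x_{i,n}\| \to \infty$: the identity $\|g_n\|_p^p = \sum_i \|x_{i,n}\|^p \mu(A_{i,n})$ forces $\mu(A_{i,n}) \to 0$, so such pieces can be removed at cost $o(1)$ without affecting minimality in the limit. After this reduction, the $w^*$-sequential compactness of the unit ball of $X$ yields (along a further subnet) $x_{i,n_\beta} \xrightarrow{w^*} x_i^* \in X$, while boundedness of $(\ind_{A_{i,n_\beta}})$ in $L^\infty(\Omega)$ provides (along yet a further subnet) $\ind_{A_{i,n_\beta}} \xrightarrow{w^*} \phi_i \in L^\infty(\Omega)$ with $0 \le \phi_i \le 1$ and $\sum_i \phi_i = 1$ a.e. Testing the $w^*$-convergence of $g_{n_\beta}$ in $L^p(X)$ against elements of $L^q(Y)$ then identifies
$$g(\omega) = \sum_i \phi_i(\omega)\, x_i^* \quad \text{a.e.}$$

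To force $\phi_i \in \{0,1\}$ a.e., pointwise Jensen gives
$$\|f(\omega) - g(\omega)\|^p \;\le\; \sum_i \phi_i(\omega)\, \|f(\omega) - x_i^*\|^p,$$
with strict inequality (using $p > 1$) whenever $\phi_i(\omega)$ is nontrivially supported on indices with distinct $f(\omega) - x_i^*$. Introducing the Voronoi comparator $\tilde g := \sum_i x_i^* \ind_{B_i^V} \in \mathscr{G}_{p,k}(X)$ with $B_i^V := \{\omega : \|f(\omega) - x_i^*\| \le \|f(\omega) - x_j^*\|\ \forall j\}$ (any fixed tie-breaking), one obtains
$$\int \sum_i \phi_i \|f - x_i^*\|^p d\mu \;\ge\; \int \min_i \|f - x_i^*\|^p d\mu = \|f - \tilde g\|_p^p \;\ge\; \mathscr D_{p,k}(f)^p.$$
A matching upper bound $\int \sum_i \phi_i \|f - x_i^*\|^p d\mu \le \lim_\beta \|f - g_{n_\beta}\|_p^p = \mathscr D_{p,k}(f)^p$ then forces equality throughout; equality in Jensen collapses each $\phi_i$ to an indicator a.e., so $g \in \mathscr{G}_{p,k}(X)$.

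The main obstacle is establishing this matching upper bound. The $w^*$-convergence $x_{i,n_\beta} \to x_i^*$ in $X = Y^*$ gives only pointwise lower semi-continuity $\|f(\omega) - x_i^*\|^p \le \liminf_\beta \|f(\omega) - x_{i,n_\beta}\|^p$, so Fatou alone runs the wrong way. Here the G\^ateaux differentiability of the norm of $X$ is the essential tool: it linearizes $\|f(\omega) - x\|^p$ at $x = x_i^*$ and produces a correction term that vanishes under $w^*$-convergence in $Y^*$; combined with the RNP of $Y$ to ensure the relevant subdifferentials pair correctly against $L^q(Y)$, this controls the nonlinear integrals $\int \ind_{A_{i,n_\beta}} \|f - x_{i,n_\beta}\|^p d\mu$ in the limit and closes the argument.
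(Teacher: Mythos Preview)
Your overall strategy is reasonable, but there is a genuine gap, and you have misplaced the role of G\^ateaux differentiability. The ``matching upper bound'' $\int\sum_i\phi_i\,\|f-x_i^*\|^p\,d\mu\le\mathscr D_{p,k}(f)^p$ that you flag as the main obstacle in fact follows without any differentiability: splitting
\[
\|f-g_n\|_p^p-\sum_i\int\phi_i\|f-x_i^*\|^p
=\sum_i\int\ind_{A_{i,n}}\!\bigl(\|f-x_{i,n}\|^p-\|f-x_i^*\|^p\bigr)
+\sum_i\int(\ind_{A_{i,n}}-\phi_i)\|f-x_i^*\|^p,
\]
the second sum vanishes by the $w^*$-convergence of the indicators, and the first has nonnegative $\liminf$ by Fatou and $w^*$-lower semicontinuity of $\|\cdot\|$ (this is exactly the $\alpha_{i,n}$ argument in the proof of Theorem~\ref{the:mainresult I}). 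Your proposed linearization is both unnecessary here and problematic on its own terms: the G\^ateaux derivative of the norm on $X=Y^*$ lands in $X^*=Y^{**}$, not in $Y$, so there is no reason the ``correction term'' pairs against $x_{i,n}-x_i^*$ in a way that vanishes under $w^*$-convergence in $Y^*$.

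The real gap is the step ``equality in Jensen collapses each $\phi_i$ to an indicator''. With the upper bound in hand you only obtain $\int\sum_i\phi_i\|f-x_i^*\|^p=\int\min_i\|f-x_i^*\|^p$, which merely says that $\phi_i(\omega)>0$ forces $i\in\arg\min_j\|f(\omega)-x_j^*\|$. You do not get equality in the pointwise Jensen inequality, because nothing bounds $\|f-g\|_p$ from below by $\mathscr D_{p,k}(f)$ until you already know $g\in\mathscr G_{p,k}(X)$; and even if you did, G\^ateaux differentiability is smoothness, not strict convexity, so on the set $\{\omega:f(\omega)\in\partial V_i\}$ a genuinely fractional $\phi$ supported on the tied indices can still give equality. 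Thus $g=\sum_i\phi_i x_i^*$ may fail to lie in $\mathscr G_{p,k}(X)$ unless $\mu(f^{-1}(\partial V_i))=0$ for every $i$. This boundary-measure-zero statement is precisely Proposition~\ref{pro:simple special}, and \emph{that} is where G\^ateaux differentiability is actually used (via a first-order optimality argument for the $p$-th mean). The paper's proof is organized around this: it first builds the Voronoi minimizer $g$ from the limits $x_i^*$, invokes Proposition~\ref{pro:simple special} to get the simple special $f$-Voronoi form, and then proves $h_n\xrightarrow{w^*}g$ directly by showing $\mu(A_{j,n}\cap f^{-1}(\hbox{int}\,V_i))\to 0$ for $i\neq j$. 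Your route could be completed, but only by importing that same proposition at the crucial point.
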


Since reflexivity implies RNP (see Corollary 2.15 in \cite{Pisier} for instance), 
the previous result applies to any reflexive Banach space endowed with a G\^ateaux differentiable norm.\\

With an additional hypothesis, we obtain a stronger result:

\begin{corollary}
\label{cor:norm-compact}
Let $(\Omega,\F,\mu)$ be a measure space, $p\in(1,+\infty)$, $k\ge 1$ and $X$ be a Banach space with a uniformly 
convex and G\^ateaux differentiable norm. Then $\mathscr{G}_{p,k}(X)$ is approximatively norm-compact.
\end{corollary}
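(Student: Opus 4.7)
The plan is to apply Theorem~\ref{the:mainresult II} to obtain approximative $w^*$-compactness and then upgrade it to norm compactness via the uniform convexity of $L^p(\Omega,\F,\mu,X)$. By the Milman--Pettis theorem, a uniformly convex Banach space is reflexive, so $X\cong X^{**}=(X^*)^*$ is a dual space whose predual $X^*$ is itself reflexive and hence has the RNP. In a reflexive space the $w^*$- and weak topologies coincide, and Kakutani together with Eberlein--Smulian makes the unit ball $w^*$-sequentially compact. With the G\^ateaux differentiability of the norm assumed by hypothesis, all the assumptions of Theorem~\ref{the:mainresult II} are satisfied, so $\mathscr{G}_{p,k}(X)$ is approximatively $w^*$-compact.

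Given $f\in L^p(\Omega,\F,\mu,X)$ and a minimizing sequence $(g_n)\subset\mathscr{G}_{p,k}(X)$, the sequence is norm-bounded in $L^p(X)$, which is reflexive since $X$ is reflexive and $p\in(1,\infty)$. Theorem~\ref{the:mainresult II} combined with Eberlein--Smulian then yields a subsequence $(g_{n_j})$ converging weakly in $L^p(X)$ to some $g\in\mathscr{G}_{p,k}(X)$. Weak lower semicontinuity of the norm gives $\|f-g\|_p\le\liminf_j\|f-g_{n_j}\|_p=\mathscr D_{p,k}(f)$, while the reverse inequality is immediate from $g\in\mathscr{G}_{p,k}(X)$, so $g$ is a minimizer for $f$ and $\|f-g_{n_j}\|_p\to\|f-g\|_p$.

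For the norm upgrade, I would invoke the classical fact that $X$ uniformly convex together with $p\in(1,\infty)$ forces $L^p(\Omega,\F,\mu,X)$ to be uniformly convex (see \cite{Diestel}). In any uniformly convex Banach space the Kadec--Klee property holds: weak convergence combined with convergence of norms to the norm of the limit implies norm convergence. Applied to $(f-g_{n_j})_j$, which converges weakly to $f-g$ with $\|f-g_{n_j}\|_p\to\|f-g\|_p$, this forces $f-g_{n_j}\to f-g$, hence $g_{n_j}\to g$, in $L^p$-norm, which is the desired approximative norm-compactness. The only non-routine ingredient is the inheritance of uniform convexity from $X$ to $L^p(X)$, which is classical; everything else is a clean combination of Theorem~\ref{the:mainresult II} with standard Banach space geometry.
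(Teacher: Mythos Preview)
Your proof is correct and follows essentially the same route as the paper: verify the hypotheses of Theorem~\ref{the:mainresult II} via reflexivity of $X$, extract a weakly convergent subsequence with limit a minimizer $g\in\mathscr{G}_{p,k}(X)$, and then upgrade weak convergence plus convergence of norms to norm convergence using the uniform convexity of $L^p(X)$ (the paper cites Proposition~3.32 in \cite{Brezis} for this Radon--Riesz/Kadec--Klee step). Your version spells out the verification of the hypotheses in slightly more detail (Milman--Pettis, RNP of the predual, Eberlein--\v{S}mulian), but the argument is the same.
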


Enflo's theorem (see \cite{Enflo}) states that a Banach space $X$ is superreflexive if and only if 
it admits an equivalent uniformly convex norm. In this case, it is always possible to find an equivalent 
norm on $X$ which is uniformly convex and G\^ateaux differentiable (even uniformly Fréchet differentiable, 
see Theorem 9.14 in \cite{BST}). Moreover, using a technique called Asplund averaging (see \cite{DGZ}, p.52), it can be proved that the sets of equivalent norms being uniformly convex and G\^ateaux differentiable is dense in the set of all equivalent norms.\\

In Section \ref{sec:II}, we introduce two concepts that play an important role
in the proofs of the main results. One of them is the Voronoi cells associated to a 
finite subset of $X$ (see Definition \ref{def:Voronoi cells}). 
The other one is the $p$-th mean of a function $f\in L^p$ over
a measurable set $A$. 
In Section \ref{sec:proof main I}, we prove Theorem \ref{the:mainresult I}. In Section
\ref{sec:particular}, we provide results about special forms for minimizers
(see Definition \ref{def:f Voronoi form} and \ref{def:special Voronoi form}).
In Theorem \ref{the:special Voronoi}, 
we show that all minimizers are in special $f$-Voronoi form, 
when $1\le p<\infty$. In Proposition \ref{pro:simple special}, assuming
the norm in $X$ is G\^ateaux differentiable and $1<p<\infty$, we show that every minimizer is in simple special $f$-Voronoi form. 
This concept, is crucial to prove Theorem \ref{the:mainresult II} in Section \ref{sec:proof main II} (see also there the proof of
Corollary \ref{cor:norm-compact}).
Finally, in Appendix \ref{app:pmeans}, we summarize some properties about $p$-th means.

\section{Some notations}
\label{sec:II}

We believe that our notation is quite standard. For example, the cardinal of a set $I$ is denoted by $|I|$. 
The closure, the interior and the boundary of a subset $A$ of a Banach space $X$ are respectively denoted by $\overline{A}$, $\text{int}(A)$ and $\partial A$. The distance between two subsets $A$ and $B$ of $X$ will be denoted by $d(A,B)$. \\

Now we introduce some definitions and specific notations that will be used in this paper.

\begin{definition}
\label{def:Voronoi cells}
Let $X$ be a Banach space and $\x=\{x_1,...,x_k\}\subset X$ be a finite set of different points. The \textbf{Voronoi cells associated
to $\x$} is the finite collection $\VS(x)=\{V_i\}_{1\leq i\leq k}$ given by
$$
V_i=\left\{y\in X\ :\  \|y-x_i\|\le \min_{j\neq i}\|y-x_j\|\right\}.
$$
\end{definition}

Notice that each $V_i$ is closed and it is star convex with respect to $x_i$. Also $x_i$ is in the interior of
$V_i$ and $y\in \partial V_i$ if and only if $y\in V_i$ and there exists $j\neq i$ such that $\|y-x_i\|=\|y-x_j\|$. Therefore
$$
\hbox{int} (V_i)=\{y\in X: \|y-x_i\|< \|y-x_j\|, \hbox{ for all } j\neq i\}.
$$
This shows that $\hbox{int}(V_i)\cap V_j=\emptyset$ for all $i\neq j$.

\begin{definition}
\label{def:reduced form}
Let $(\Omega,\F,\mu)$ be a measure space and $X$ be a Banach space. 
Let $h=\sum\limits_{i=1}^k x_i \ind_{A_i}\in\mathcal S(X)$ a simple function. 
\begin{itemize}
    \item We say that $h$ is in \textbf{reduced form}
if the $x_i$'s are all different and $\{A_i\}_{1\leq i\leq k}$ is a measurable partition of $\Omega$ with sets 
of positive measure, except for a set of measure $0$, that is $\mu(A_i\cap A_j)=0$ 
for all $i\neq j$ and $\mu(\Omega \setminus \cup_i A_i)=0$.
\item We also define the \textbf{degree} of $h$ by 
$$
\deg(h)=\min\left\{l\geq 1\ :\ \exists\, g=\sum_{i=1}^ly_i\ind_{B_i}\in\mathcal S(X)\text{ such that } h=g\text{ a.e.}\right\}.
$$
\end{itemize}
\end{definition}

\begin{remark} 
\label{rem:1}
When $\mu$ is an infinite measure, $p<\infty$ and $h\in L^p(X)$ is in reduced form, then there exists a unique index
$1\le i_0\le \deg(h)$ so that $x_{i_0}=0$ and for all $1\le i\le \deg(h)$ with $i\neq i_0$, it holds $\mu(A_i)<\infty$. 
By reordering the terms in $h$ we assume always that $i_0=1$.
\end{remark}

\begin{definition}
\label{def:f Voronoi form}
Let $(\Omega,\F,\mu)$ be a measure space and $X$ be a Banach space. Let $f\in L^p(X)$ and 
$h=\sum\limits_{i=1}^k x_i \ind_{A_i}\in\mathcal S(X)$ in reduced form. 
Let $\x=\{x_1,...,x_k\}\subset X$ and $\VS(x)=\{V_i\}_{1\leq i\leq k}$. 
We say that $h$ is in {\bf \emph{$f$-Voronoi form}} if it holds that 
$A_i\subset f^{-1}(V_i)$ $\mu$-a.e. for all $1\leq i\leq k$. 
\end{definition}

Note that, since $\{A_i\}_{1\leq i\leq k}$ is a partition, we can assume further that $f^{-1}(\hbox{int}(V_i))\subset A_i$
holds $\mu$-a.e., for all $i$.

\begin{definition}
\label{def:pth mean} Let $(\Omega,\F,\mu)$ be a measure space and $X$ be a Banach space. 
Let $f\in L^p(X)$ and $A\in\F$. The function $M_p(f,A):X \to \RR_+$ is defined by
$$
M_p(f,A)(x)=\begin{cases}\int_A \|f(w)-x\|^p \ \mu(dw) &\hbox{ if } p<\infty\\
					\|(f-x)\ind_A\|_\infty	          &\hbox{ if } p=\infty
	            \end{cases},
$$ We denote by $\underline M_p(f,A)=\inf\limits_{y\in X} M_p(f,A)(y)$, the infimum
of $M_p(f,A)$. Given $\e\ge 0$, we say that $x\in X$ is an \textbf{$\e$-$p$-th mean} for $f$ in $A$ if
$$
M_p(f,A)(x)\le \underline M_p(f,A)+\e.
$$
In case $\e=0$, we simply say $x$ is a \textbf{$p$-th mean} of $f$ in $A$. 
\end{definition}

Some properties of the function $M_p(f,A)$ and the $\e$-$p$-th means used in this document are presented in the Appendix \ref{app:pmeans}.

\section{Existence of a minimizer. Proof of Theorem \ref{the:mainresult I}}
\label{sec:proof main I}

\begin{proposition}
\label{pro:bounded}
Let $(\Omega,\F,\mu)$ be a measure space, $X$ a Banach space, $p\in[1,\infty]$, $f\in L^p(X)$ and $k\geq 1$. 
Let $(h_n)_n\subset\mathscr G_{p,k}(X)$ a minimizing sequence for $f$. 
Then there exist a uniformly bounded sequence 
$(g_n)_n\subset\mathscr G_{p,k}$ and a subsequence $(h'_{n})_n$ of $(h_n)_n$ such that 
$\|g_n-h'_n\|_p\to 0$, in particular $\|f-g_n\|\to\mathscr D_{p,k}(f)$.
\end{proposition}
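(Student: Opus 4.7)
The plan is to split on whether $p$ is finite. For $p=\infty$, once each $h_n$ is put in reduced form, $\|h_n\|_\infty=\max_i\|x_i^{(n)}\|$, and the triangle inequality $\|h_n\|_\infty\le\|f\|_\infty+\|f-h_n\|_\infty$ already delivers a uniform bound on the values of $h_n$, so one may simply take $g_n=h_n$. The substantive case is $p<\infty$, for which the plan is to extract subsequences until the combinatorial and metric structure of the $h_n$ stabilizes, and then to construct $g_n$ from $h_n$ by replacing with $0$ exactly those values whose supporting sets collapse to measure zero.

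Concretely, I would first extract a subsequence so that each $h_n$ is in reduced form with a fixed number $l\le k$ of distinct values, $h_n=\sum_{i=1}^l x_i^{(n)}\ind_{A_i^{(n)}}$, placing the zero-valued piece at $i=1$ when $\mu(\Omega)=\infty$ (Remark~\ref{rem:1}). Since $(h_n)$ is minimizing, $\|h_n\|_p\le\|f\|_p+\|f-h_n\|_p$ is uniformly bounded; expanding $\|h_n\|_p^p=\sum_i\|x_i^{(n)}\|^p\mu(A_i^{(n)})$ yields a uniform a priori bound $\|x_i^{(n)}\|^p\mu(A_i^{(n)})\le C^p$ on each atomic piece. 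Extract further so that $\mu(A_i^{(n)})\to\alpha_i\in[0,\infty]$ for each $i$, let $I=\{i:\alpha_i=0\}$, and define $g_n$ by replacing $x_i^{(n)}$ by $0$ for $i\in I$ (merging those pieces into the zero-valued piece, existing or newly created). Then $g_n\in\mathscr G_{p,l}\subseteq\mathscr G_{p,k}$, and for $i\notin I$ the a priori bound together with $\alpha_i>0$ forces $\|x_i^{(n)}\|$ to remain bounded; hence the values of $(g_n)$ are uniformly bounded.

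The key step is showing $\|g_n-h_n\|_p\to 0$. A direct computation gives
\[
\|f-h_n\|_p^p-\|f-g_n\|_p^p=\sum_{i\in I}\int_{A_i^{(n)}}\bigl(\|f-x_i^{(n)}\|^p-\|f\|^p\bigr)\,d\mu.
\]
The left-hand side is at most $\|f-h_n\|_p^p-\mathscr D_{p,k}(f)^p$ because $g_n\in\mathscr G_{p,k}$, and this tends to $0$ by minimality. For $i\in I$, absolute continuity of $\int\|f\|^p\,d\mu$ gives $\int_{A_i^{(n)}}\|f\|^p\,d\mu\to 0$. Together these force $\int_{A_i^{(n)}}\|f-x_i^{(n)}\|^p\,d\mu\to 0$ for each $i\in I$, and then Minkowski applied to $x_i^{(n)}\ind_{A_i^{(n)}}=(x_i^{(n)}-f)\ind_{A_i^{(n)}}+f\ind_{A_i^{(n)}}$ gives $\|x_i^{(n)}\ind_{A_i^{(n)}}\|_p\to 0$, whence $\|g_n-h_n\|_p^p=\sum_{i\in I}\|x_i^{(n)}\|^p\mu(A_i^{(n)})\to 0$. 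The main obstacle is precisely this last step: the a priori bound alone only gives $\|x_i^{(n)}\|^p\mu(A_i^{(n)})\le C^p$ and not vanishing. The trick is that $g_n$ is itself a competitor in $\mathscr G_{p,k}$, so the minimality of $(h_n)$ combined with absolute continuity of $\|f\|^p$ squeezes the dropped pieces to zero.
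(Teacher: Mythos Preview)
Your argument is correct and essentially matches the paper's: drop the ``bad'' pieces to $0$, then use that the resulting $g_n$ is itself a competitor together with absolute continuity of $\int\|f\|^p\,d\mu$ to force the dropped pieces to vanish in $L^p$. The only cosmetic difference is that the paper defines $I=\{i:\|x_{i,n}\|\to\infty\}$ (and then deduces $\mu(\bigcup_{i\in I}A_{i,n})\to 0$), whereas you define $I=\{i:\alpha_i=0\}$ (and then deduce boundedness of $\|x_i^{(n)}\|$ for $i\notin I$); these are simply the two directions of the same a~priori estimate $\|x_i^{(n)}\|^p\mu(A_i^{(n)})\le C^p$.
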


\begin{proof} 
The case $p=\infty$ is evident. So we assume that $p<\infty$.
Let us write $h_n$ in reduced form
$$
h_n=\sum\limits_{i=1}^{\ell(n)} x_{i,n} \ind_{A_{i,n}} \in \mathscr{G}_k(X),
$$
where $\ell(n)=\deg(h_n)\le k$. By considering 
a subsequence if necessary, we also assume $\ell(n)=\ell$ is fixed. By taking a further 
subsequence if necessary, we can and do assume that $(\|x_{i,n}\|)_n$ converges in $\overline{\mathbb R}$. 
Note that $(h_n)_n$ is a bounded sequence in $L^p(X)$. Define 
$$
I=\{i\in\{1,...,l\}\ :\ \|x_{i,n}\|\to\infty\}.
$$ 
For all $n\in\mathbb N$, we define 
$$
g_n=\sum_{i\notin I} x_{i,n} \ind_{A_{i,n}} + 0\ind_{B_n}\in \mathscr G_{p,k^*}(X)\subset \mathscr G_{p,k}(X),
$$ 
where $B_n=\bigcup_{i\in I}A_{i,n}$ and $k^*$ is defined by $k^*=\ell\le k$ if $I$ is empty and $k^*\le \ell-|I| +1\le k$ if not. Since $(h_n)_n$ is bounded and $\|h_n\|_p^p\geq\mu(B_n)\min_{i\in I}\|x_{i,n}\|^p$, 
it follows that $\mu(B_n)\to 0$. We have that 
$$
\|h_n-g_n\|_p^p=\sum\limits_{i\in I} \|x_{i,n}\ind_{A_{i,n}}\|_p^p\le 2^{p-1} 
\left( \sum\limits_{i\in I} \|(f-x_{i,n})\ind_{A_{i,n}}\|_p^p+\int_{B_n} \|f(w)\|^p\, d\mu(w)\right).
$$ 
Since $\int_{B_n} \|f(x)\|^p\, d\mu(x)\to 0$, it remains to show that 
$\sum\limits_{i\in I} \|(f-x_{i,n})\ind_{A_{i,n}}\|^p\to 0$.
We have that for all $n\in\mathbb N$
$$
\begin{array}{ll}
\mathscr{D}_{p,k}(f)^p&\hspace{-0.2cm}\le \|f-g_n\|_p^p\le
\|f-g_n\|^p+\sum_{i\in I} \|(f-x_{i,n})\ind_{A_{i,n}}\|_p^p\\
\\
&\hspace{-0.2cm}=\|f- h_n\|_p^p+\|f\ind_{B_n}\|_p^p\to\mathscr{D}_{p,k}(f)^p,
\end{array}
$$
proving that $\sum\limits_{i\in I} \|(f-x_{i,n})\ind_{A_{i,n}}\|_p^p\to 0$.
\end{proof}

\begin{proposition}
\label{pro:Voronoi_form}
Let $(\Omega,\F,\mu)$ be a measure space, $X$ a Banach space, $p\in [1,\infty]$, $f\in L^p(X)$ and $k\ge 1$. 
Suppose that there exists a minimizing sequence $(g_n)_n\subset\mathscr{G}_{p,k}(X)$ such that the reduced form of $g_n$ is given by
$$
g_n=\sum_{k=1}^\ell x_i\ind_{A_{i,n}},
$$ 
where $\deg(g_n)=\ell\leq k$ and $x_1,...,x_\ell\in X$. Then, there exists a minimizer $g\in\mathscr{G}_{p,\ell}(X)$ for $f$ in $f$-Voronoi form. 

\end{proposition}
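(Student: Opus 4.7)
The key observation is that since the values $x_1,\ldots,x_\ell$ are fixed across the sequence $(g_n)_n$, the only freedom in $g_n$ is the partition, and the pointwise-optimal choice of partition is exactly the one given by the Voronoi cells of $\{x_1,\ldots,x_\ell\}$. So rather than passing to a subsequential limit of partitions, I would just write down the candidate minimizer directly.

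First, using $\VS(\x)=\{V_1,\ldots,V_\ell\}$ from Definition \ref{def:Voronoi cells}, I disjointify by setting $V_i^\circ=V_i\setminus\bigcup_{j<i}V_j$. Since every $y\in X$ lies in some $V_i$ (take $i$ minimizing $\|y-x_i\|$), the family $\{V_i^\circ\}_{i=1}^\ell$ is a Borel partition of $X$. Define $B_i=f^{-1}(V_i^\circ)$; by strong measurability of $f$ the sets $B_i$ form a measurable partition of $\Omega$ (up to a $\mu$-null set), and $B_i\subset f^{-1}(V_i)$ by construction. Now set
$$
g=\sum_{i=1}^\ell x_i\,\ind_{B_i}.
$$
By construction $g$ is in $f$-Voronoi form in the sense of Definition \ref{def:f Voronoi form}.

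Next, I would establish the pointwise minimality of $g$. For $\mu$-a.e.\ $w\in\Omega$ there is a unique $i$ with $w\in B_i$, and then $f(w)\in V_i$, so
$$
\|f(w)-g(w)\|=\|f(w)-x_i\|=\min_{1\le j\le\ell}\|f(w)-x_j\|\le\|f(w)-g_n(w)\|
$$
for every $n$, because each $g_n$ takes only the values $x_1,\ldots,x_\ell$. Integrating (for $p<\infty$) or taking essential suprema (for $p=\infty$) gives $\|f-g\|_p\le\|f-g_n\|_p$ for every $n$. In particular $\|f-g\|_p<\infty$, whence $g\in L^p(X)$ (so the condition in Remark \ref{rem:1} on the unique zero term is automatically satisfied when $\mu$ is infinite), and therefore $g\in\mathscr{G}_{p,\ell}(X)$.

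Finally, letting $n\to\infty$ gives $\|f-g\|_p\le\mathscr D_{p,k}(f)$. Since $\mathscr{G}_{p,\ell}(X)\subset\mathscr{G}_{p,k}(X)$ we also have $\mathscr D_{p,k}(f)\le\mathscr D_{p,\ell}(f)\le\|f-g\|_p$, so all three quantities coincide and $g$ is a minimizer for $f$ in $\mathscr{G}_{p,\ell}(X)$. There is no real obstacle here: the argument is a direct pointwise comparison, and the only things to check carefully are the measurability of the sets $B_i$ (handled by strong measurability of $f$ and the fact that each $V_i$ is closed) and the automatic membership $g\in L^p(X)$ coming from the triangle inequality $\|g\|_p\le\|f-g\|_p+\|f\|_p$.
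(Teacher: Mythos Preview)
Your argument is essentially the same as the paper's: disjointify the Voronoi cells, define $g=\sum_i x_i\ind_{f^{-1}(V_i^\circ)}$, and compare pointwise with $g_n$ to conclude $\|f-g\|_p\le\|f-g_n\|_p$. The only step you skip is that ``$g$ is in $f$-Voronoi form by construction'' presupposes $g$ is in \emph{reduced} form, but some $B_i$ may be $\mu$-null; the paper handles this by passing to $J=\{j:\mu(B_j)>0\}$ and noting that the new Voronoi cells $W_j$ of $\{x_j\}_{j\in J}$ satisfy $V_j^\circ\subset V_j\subset W_j$, so the reduced form is still in $f$-Voronoi form.
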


\begin{proof} Since $g_n=\sum_{k=1}^\ell x_i\ind_{A_{i,n}}$ is in reduced form, we have the $x_i$'s are all different. Consider $\VS(\x)=\{V_i\}_{1\leq i\leq\ell}$ the Voronoi decomposition of $X$ associated to $\x=\{x_i\}_{i\le \ell}$. 
Define the following modification of $\VS(\x)$. Let $D_1=V_1$ and define recursively for $2\leq j\leq \ell$
\begin{equation}
\label{eq:4}
D_j=V_j\setminus \bigcup_{m<j} D_m
\end{equation}
Notice that $\hbox{int}(V_j)\subset D_j\subset V_j$ for all 
$j$ and that $(D_j)_{1\leq j\leq \ell}$ is a partition of $X$.
For all $i,j$ and all $w\in A_{i,n}\cap f^{-1}(D_j)$, we have 
\begin{equation}
\label{eq:Vor1}
\|f(w)-x_j\|\le \|f(w)-x_i\|,
\end{equation}
with equality if $i=j$.
Consider $g$ given by
$$
g=\sum\limits_{i,j} x_j \ind_{A_{i,n}\cap f^{-1}(D_j)}=\sum\limits_{j} x_j \ind_{f^{-1}(D_j)}.
$$
We notice that $g_n=\sum\limits_{i,j} x_i \ind_{A_{i,n}\cap f^{-1}(D_j)}$.
Then, when $p=\infty$, it holds
$$
\|f-g\|_\infty=\max_{i,j}\|(f-x_j)\ind_{A_{i,n}\cap f^{-1}(D_j)}\|_\infty\le 
\max_{i,j}\|(f-x_i)\ind_{A_{i,n}\cap f^{-1}(D_j)}\|_\infty
=\|f-g_n\|_\infty
$$
proving that $g$ is a minimizer. Next, assume that $p<\infty$. Integrating the inequality \eqref{eq:Vor1}, we obtain that
$$
\|f-g\|_p^p=\sum_{i,j} \int_{A_{i,n}\cap f^{-1}(D_j)} \|f(w)-x_j\|^p \,\mu(dw)\le 
\sum_{i,j} \int_{A_{i,n}\cap f^{-1}(D_j)} \|f(w)-x_i\|^p \,\mu(dw)=\|f-g_n\|_p^p,
$$
and then $g$ is a again a minimizer.

Now, let us show that $g$ is in $f$-Voronoi form. For that, consider $J=\{j: \mu(f^{-1}(D_j))>0\}$, 
which is not empty because $(f^{-1}(D_j))_{j\in J}$ is a partition except for a set of measure $0$.
Consider ${\bf y}=\{x_j\}_{j\in J}$ and the Voronoi cells associated $\VS({\bf y})=\{W_j\}_{j\in J}$. Since $D_j\subset V_j\subset W_j$, we have 
$$
g=\sum\limits_{j\in J} x_j \ind_{f^{-1}(D_j)}
$$
is in $f$-Voronoi form.
\end{proof}

\begin{proof} ({\bf Theorem \ref{the:mainresult I}})
By Proposition \ref{pro:bounded}, there exists a uniformly bounded mimimizing sequence $(h_n)_n$. We write $h_n$ in reduced form
$$
h_n=\sum\limits_{i=1}^{l(n)} x_{i,n} \ind_{A_{i,n}} \in \mathscr{G}_{p,k}(X),
$$
where $l(n)\le k$. For all $1\leq i\le l$, the sequence $(x_{i,n})_{n}$ is bounded. 
By $w^*$-sequentially compactness and taking a subsequence if necessary, we can suppose that $\ell(n)=\ell<k$ is constant and
$$
x_{i,n}\xrightarrow[]{w^*} x_i.
$$
For all $n\in\mathbb N$, we define 
$$
g_{n}=\sum\limits_{i=1}^l x_i \ind_{A_{i,n}}\in \mathscr{G}_{p,k}(X),
$$
where we assume without loss of generality that all $(x_i)_{i\le \ell}$ 
are different. In case $\mu(\Omega)=\infty$, we can suppose that
$x_1=0$ and $\mu(A_{i,n})<\infty$ for all $2\le i\le l$ (see Remark \ref{rem:1}).
\medskip

To continue with the proof, we first assume that $\mu(\Omega)<\infty$. Let us prove that $(g_n)_n$ admits a 
minimizing subsequence. Since $(g_n)_n$ and $(h_n)_n$ are uniformly bounded on a finite measure space, 
it is easy to see that the sequence $(\|h_n-f\|^p-\|g_n-f\|^p)_n$ is bounded below by 
a integrable function. Then, Fatou's lemma implies that
$$
\liminf_n\|h_n-f\|_p^p-\|g_n-f\|_p^p\geq\sum_{i=1}^l\int\liminf_n\alpha_{i,n}(\omega)d\mu(\omega),
$$ 
where $\alpha_{i,n}(\omega)=\left(\|x_{i,n}-f(\omega)\|^p-\|x_i-f(\omega)\|^p\right)\ind_{A_{i,n}}(\omega)$. Let $i\in\{1,...,l\}$
and take $\omega\in\Omega$. We define $I(\omega)=\{n\in\mathbb N\ :\ \omega\in A_{i,n}\}$. The accumulation points of the sequence $(\alpha_{i,n}(\omega))_{n\in I(\omega)}$ are non-negative by $w^*$-semi-continuity of the norm and since 
$(\alpha_{i,n}(\omega))_{n\in I(\omega)^c}$ is the null sequence, we deduce that the accumulation points of $(\alpha_{i,n}(\omega))_n$ are also non-negative. It follows that $\liminf_n\alpha_{i,n}(\omega)\geq 0$ for 
any $i\in\{1,...,l\}$ and any $\omega\in\Omega$. Then, we obtain that 
$$
\liminf_n\|h_n-f\|_p^p-\|g_n-f\|_p^p\geq 0,
$$ 
and it follows that $(g_n)_n$ admits a minimizing subsequence. We conclude this case using Proposition \ref{pro:Voronoi_form}. 

Now, we assume $\mu(\Omega)=\infty$. For every $\e>0$, consider $G_\e=\{w: \|f(w)\|\ge \e\}$, which is a set
of finite measure. By Fatou's Lemma, we have
$$
\begin{array}{ll}
\mathscr D_{p,k}(f)^p\hspace{-0.2cm}&=\lim\limits_n \|f-h_n \|_p^p\ge \liminf\limits_n \|(f-h_n)\ind_{G_\e}\|_p^p\\
\hspace{-0.2cm}&=\liminf\limits_n \left(\|(f-h_{n})\ind_{G_\e}\|_p^p-\|(f-g_{n})\ind_{G_\e}\|_p^p
+\|(f-g_{n})\ind_{G_\e}\|_p^p\right)\\
\hspace{-0.2cm}&\ge \liminf\limits_n \left(\|(f-h_{n})\ind_{G_\e}\|_p^p-\|(f-g_{n})\ind_{G_\e}\|_p^p \right)+
\liminf\limits_n\|(f-g_{n})\ind_{G_\e}\|_p^p\\
\hspace{-0.2cm}&\ge \liminf\limits_n\|(f-g_{n})\ind_{G_\e}\|_p^p
\end{array}
$$ 
For $\e=2^{-r}$ with $r\in \NN$, take $n_r$ so that with $B_r=G_{2^{-r}}$
$$
\|(f-g_{n_r})\ind_{B_r}\|_p^p\le \mathscr D_{p,k}(f)^p +2^{-r}
$$
Recall that $x_1=0$, that is $g_n(w)=0$ for all $w\in A_{1,n}$. Note that
$$
\tilde g_{n_r}=g_{n_r}\ind_{B_r}=\sum\limits_{i=2}^l x_i\ind_{A_{i,n_r}\cap B_{r}}+
0 \ind_{A_{1,n_r}\cup B^c_{r}}\in \mathscr{G}_{p,l}(X)\subset \mathscr{G}_{p,k}(X),
$$
and
$$
\|f-\tilde g_{n_r}\|_p^p=\|(f-g_{n_r})\ind_{B_r}\|_p^p+
\int_{B^c_{r}\setminus A_{\zeta,n_r}} \!\!\!\!\!\|f(w)\|^p \, \mu(dw)
\le \mathscr D_{p,k}(f)^p +2^{-r}+\int_{B^c_{r}} \|f(w)\|^p \, \mu(dw).
$$
Notice that $\|f\|^p\ind_{B^c_{r}}=\|f\|^p\ind_{\|f\|\le 2^{-r}}$ converges pointwise to $0$ and it 
is dominated by $\|f\|^p$. This shows that the sequence $(\tilde g_{n_r})_r$ is 
a minimizing sequence. We conclude again by Proposition \ref{pro:Voronoi_form}.
\end{proof}

\begin{remark}\label{finite_dim}
Note that if $X$ is finite-dimensional then the previous result also holds for $p=\infty$. In fact, in this case, we have that $x_{i,n}\xrightarrow[]{} x_i$. It follows that $$\|g_n-f\|_\infty\leq\|g_n-h_n\|_\infty+\|h_n-f\|_\infty=\max_{1\leq i\leq l}\|x_{i,n}-x_i\|+\|h_n-f\|_\infty\to\mathscr{D}_{\infty,k}(f),$$ proving that $(g_n)_n$ is a minimizing sequence. Then the conclusion follows by Proposition \ref{pro:Voronoi_form}.
\end{remark}

\section{Particular form of a minimizer}
\label{sec:particular}

\begin{definition}
\label{def:special Voronoi form}
Let $(\Omega,\F,\mu)$ be a measure space, $X$ a Banach space. Let $f\in L^p(X)$ and $h=\sum\limits_{i=1}^k x_i \ind_{A_i}$ in $f$-Voronoi form. 
We say that $h$ is in {\bf \emph{special $f$-Voronoi form}} if $x_i$ is a $p$-th mean 
of $f$ in $A_i$ for all $1\leq i\leq k$. Moreover, we say that $h$ is in 
{\bf \emph{simple special $f$-Voronoi form}} if for all $1\leq i\leq k$ it holds $\mu(f^{-1}(\partial V_i))=0$, that is $A_i=f^{-1}(\hbox{int}(V_i))$ holds $\mu$-a.e.
\end{definition}

\begin{proposition}
\label{pro:f in G_pk}
Let $X$ be a Banach space, $p\in[1,\infty)$, $k\geq 1$ and $f\in L_p(X)$. 
Assume $f$ has a minimizer $g\in \mathscr{G}_{p,k}(X)$ so that $\deg(g)=r< k$. 
Then $f\in \mathscr{G}_{p,r}$. 
\end{proposition}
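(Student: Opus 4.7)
The plan is to argue by contradiction. Assume $f\notin\mathscr{G}_{p,r}$. Since $g\in\mathscr{G}_{p,r}\subset\mathscr{G}_{p,k}$ and $g$ is a minimizer we have $\mathscr D_{p,k}(f)=\|f-g\|_p$; this quantity must be strictly positive, for otherwise $f=g$ $\mu$-a.e.\ and hence $f\in\mathscr{G}_{p,r}$. I would then construct a perturbation $g'\in\mathscr{G}_{p,r+1}(X)\subset\mathscr{G}_{p,k}(X)$ with $\|f-g'\|_p<\|f-g\|_p$, which contradicts minimality. The extra slot is available precisely because $r<k$.

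Write $g$ in reduced form as $g=\sum_{i=1}^r x_i\ind_{A_i}$. Since $\|f-g\|_p^p=\sum_i\int_{A_i}\|f-x_i\|^p\,d\mu>0$, there exist an index $i_0$ and $\delta_0>0$ with $\mu(C)>0$, where $C=\{\omega\in A_{i_0}:\|f(\omega)-x_{i_0}\|>2\delta_0\}$. Because $f$ is strongly measurable it is essentially separably valued, so covering the essential range of $f|_C$ by countably many open balls of radius $\delta_0/2$ centered at a dense sequence and using pigeonhole yields a point $y\in X$ and a subset $B\subset C$ of positive measure on which $\|f-y\|<\delta_0/2$. The triangle inequality then gives $\|y-x_{i_0}\|>3\delta_0/2$; in particular $y\neq x_{i_0}$.

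Define the perturbation by swapping $x_{i_0}$ for $y$ on $B$:
\begin{equation*}
g'=\sum_{i\neq i_0}x_i\ind_{A_i}+x_{i_0}\ind_{A_{i_0}\setminus B}+y\ind_B,
\end{equation*}
which uses at most $r+1\le k$ distinct values. To see $g'\in L^p(X)$ one only needs $\mu(B)<\infty$: this is automatic when $\mu(A_{i_0})<\infty$, and in the remaining case (where Remark~\ref{rem:1} forces $x_{i_0}=0$), the bound $\|f\|>2\delta_0$ on $B$ combined with $f\in L^p(X)$ yields $\mu(B)<\infty$. The pointwise estimates $\|f-y\|<\delta_0/2$ and $\|f-x_{i_0}\|>2\delta_0$ valid on $B$ give
\begin{equation*}
\|f-g'\|_p^p-\|f-g\|_p^p=\int_B\bigl(\|f-y\|^p-\|f-x_{i_0}\|^p\bigr)\,d\mu\le\Bigl(\bigl(\tfrac{\delta_0}{2}\bigr)^p-(2\delta_0)^p\Bigr)\mu(B)<0,
\end{equation*}
contradicting the minimality of $g$.

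The main obstacle is the essential-range step: extracting a $y\neq x_{i_0}$ together with a ball around it whose preimage inside $A_{i_0}$ has positive measure. Once this localization is available, the rest is triangle-inequality bookkeeping, with the only subtle point being the use of Remark~\ref{rem:1} to ensure that the perturbed function remains in $L^p(X)$ in the infinite-measure setting.
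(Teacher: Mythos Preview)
Your proof is correct and follows essentially the same strategy as the paper: argue by contradiction, use the essential separability of $f$ to locate a ball in the range on which $f$ is bounded away from the nearest value of $g$, and exploit the unused $(r+1)$-st slot to strictly improve. The only notable difference is that the paper first replaces $g$ by an $f$-Voronoi-form minimizer via Proposition~\ref{pro:Voronoi_form} before carrying out the localization on the cells $D_j$, whereas you work directly with the original partition $\{A_i\}$ and handle the infinite-measure case by invoking Remark~\ref{rem:1}; your route is slightly more direct, but the core argument is the same.
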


\begin{proof}
By Proposition \ref{pro:Voronoi_form}, we can assume that $g$ is in special 
$f$-Voronoi form 
$$
g=\sum\limits_{i=1}^r x_i \ind_{f^{-1}(D_i)}.
$$
Since $f$ is the pointwise limit of simple functions, we can and do suppose that $f$ has separable range. 
Then defining $Y=\overline{\text{span}}(f(\Omega)\cup\{x_1,...,x_r\})$ 
if necessary, we can also assume that $X$ is separable. Let us prove that $f\in \mathscr{G}_{r}$. 
By contradiction, suppose that 
$f\notin \mathscr{G}_r$. In particular, we have 
$$
\mu(f^{-1}(\{x_1,...,x_r\}^c))>0.
$$
Note that 
$$
f^{-1}(\{x_1,...,x_r\}^c)=\bigcup_{t\in \NN,t\ge 1}f^{-1}\left(\bigcup_{i=1}^{r}D_i\setminus B(x_i,1/t)\right),
$$
which implies that there exists $t\in \NN$ and $t\ge 1$ such that 
$$
\mu\left(f^{-1}\left(\bigcup_{i=1}^{r}D_i\setminus B(x_i,1/t)\right)\right)>0.
$$ 
Since $x_i$ lies in the interior of $D_i$, we can and do assume that $B(x_i,1/t)\subset D_i$ 
for all $1\leq i\leq r$. Consider an index $1\leq j\leq r$, for which 
$$
\mu\left(f^{-1}\left(D_j \setminus B(x_j,1/t)\right)\right)>0.
$$
Let $(y_n)_n$ be a dense sequence in $A:=D_j \setminus B(x_j,1/t)$. Since 
$f^{-1}(A)=\bigcup_n f^{-1}(A)\cap f^{-1}(B(y_n,\frac{1}{2t}))$, 
there exists $n\in\mathbb N$ such that $\mu(C)>0$ where 
$$
C=f^{-1}\left(A\cap B\left(y_n,\frac{1}{2t}\right)\right)=
f^{-1}(D_j \setminus B(x_j,1/t))\cap f^{-1}\left(B\left(y_n,\frac{1}{2t}\right)\right)\subset f^{-1}(D_j).
$$
Observe that, for $w\in C$ we have $\|f(w)-y_n\|<\frac{1}{2t}\le \frac12 \|f(w)-x_j\|$.
Define
$$
\tilde g=\sum\limits_{1\leq i\leq r,\ i\neq j} x_i \ind_{f^{-1}(D_i)}+x_j \ind_{f^{-1}(D_j)\setminus C}+y_n \ind_C \in \mathscr{G}_{p,r+1}\subset \mathscr{G}_{p,k}.
$$
One has that
$$ 
\begin{array}{ll}
\|f-g\|_p^p\hspace{-0.2cm}&=\|f-\tilde g\|_p^p+\int_C\|f(w)-x_j\|^p-\|f(w)-y_n\|^p\ \mu(dw)\\
\\
\hspace{-0.2cm}&\geq\|f-\tilde g\|_p^p+
(1-2^{-p})\int_C \|f(w)-x_j\|_p^p \ \mu(dw)\\
\\
\hspace{-0.2cm}&\ge \|f-\tilde g\|_p^p+(1-2^{-p})t^{-p}\mu(C),
\end{array}
$$
which contradicts the minimality of $g$ and concludes the proof. 
\end{proof}

\begin{theorem}
\label{the:special Voronoi}
Let $(\Omega,\F,\mu)$ be a measure space, $X$ a Banach space, $p\in[1,\infty)$, $k\geq 1$ and $f\in L_p(X)$. Then, all minimizers $h\in \mathscr{G}_{p,k}(X)$ for $f$ have a reduced form in special $f$-Voronoi form and fulfill that $\deg(h)=k$ whenever $f\notin \mathscr{G}_{p,k}(X)$.
\end{theorem}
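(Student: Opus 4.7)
The plan is to verify three things about any minimizer $h \in \mathscr{G}_{p,k}(X)$, written in reduced form as $h = \sum_{i=1}^{\ell} x_i \ind_{A_i}$ with $\ell = \deg(h) \leq k$ and the $x_i$ pairwise distinct: that $h$ is in $f$-Voronoi form, that each $x_i$ is a $p$-th mean of $f$ on $A_i$, and that $\ell = k$ whenever $f \notin \mathscr{G}_{p,k}(X)$.

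For the $f$-Voronoi form I would apply the construction used in the proof of Proposition \ref{pro:Voronoi_form} to the constant sequence $h_n \equiv h$. It produces $g = \sum_j x_j \ind_{f^{-1}(D_j)}$ together with the pointwise inequality $\|f(w) - x_j\| \leq \|f(w) - x_i\|$ on $A_i \cap f^{-1}(D_j)$, which integrates to $\|f - g\|_p \leq \|f - h\|_p$. Because $h$ is a minimizer, equality must hold termwise in the integrated inequality, which forces the pointwise equality $\|f(w) - x_i\| = \|f(w) - x_j\|$ a.e.\ on $A_i \cap f^{-1}(D_j)$ and hence $f(w) \in V_i$ there. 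Since the $D_j$'s partition $X$ up to a null set, summing over $j$ gives $A_i \subset f^{-1}(V_i)$ $\mu$-a.e., which is exactly the $f$-Voronoi condition.

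For the $p$-th mean property I argue by contradiction: if some $x_{i_0}$ fails to be a $p$-th mean of $f$ on $A_{i_0}$, I pick $y \in X$ with $M_p(f, A_{i_0})(y) < M_p(f, A_{i_0})(x_{i_0})$ and replace $x_{i_0}$ by $y$ to form $\tilde h$. Whenever $\tilde h \in \mathscr{G}_{p,k}(X)$ this yields $\|f - \tilde h\|_p < \|f - h\|_p$, contradicting minimality. The only case in which $\tilde h$ might fail to lie in $L^p(X)$ is $\mu(A_{i_0}) = \infty$; but then Remark \ref{rem:1} gives $x_{i_0} = 0$, and a quick estimate using $f \in L^p$ (so that $\{\|f\| > \|y\|/2\}$ has finite measure) shows $M_p(f, A_{i_0})(y) = \infty$ for every $y \neq 0$, so $0$ is automatically a $p$-th mean and no competing $y$ exists. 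The degree statement then follows from the contrapositive of Proposition \ref{pro:f in G_pk}: if $\ell < k$, that result would force $f \in \mathscr{G}_{p,\ell}(X) \subset \mathscr{G}_{p,k}(X)$.

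The main subtlety I anticipate is the infinite-measure case in the $p$-th mean step, where nonzero replacements have to be ruled out by hand; the rest is a clean application of the constant-sequence trick together with the earlier propositions.
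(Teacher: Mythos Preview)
Your proposal is correct and follows essentially the same route as the paper: both obtain $\deg(h)=k$ from Proposition~\ref{pro:f in G_pk}, both establish the $f$-Voronoi condition by comparing $h$ with the rearranged function $\sum_j x_j\ind_{f^{-1}(D_j)}$ (you phrase it as forcing equality in the integrated inequality, the paper as a strict-inequality contradiction, but the underlying computation is identical), and both argue the $p$-th mean property by replacing an offending $x_{i_0}$ with a better value. Your explicit treatment of the infinite-measure cell in the $p$-th mean step is in fact a little more careful than the paper's own write-up.
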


\begin{proof}
When $f\in \mathscr{G}_{p,k}(X)$ the result follows directly. So, assume that $f\notin \mathscr{G}_{p,k}(X)$. Assume that $h=\sum\limits_{i=1}^k x_i \ind_{A_i}$ is a minimizer. From what we have proved, we have
all $x_1,...,x_k$ are different and $\mu(A_i)>0$ for all $i$, that is, $h$ has degree $k$. 
Consider $\VS(\x)=\{V_i\}_{1\leq i\leq k}$ the 
Voronoi cells associated to $\x=(x_1,...,x_k)$. Assume by contradiction that $\mu(A_i\setminus f^{-1}(V_i))>0$ for some $i$. 
Consider $Z_j=V_j\setminus V_i$ for all $j$ and we refine $(Z_j)_j$ to get a partition of $V_i^c$ as follows
$$
D_1=Z_1,\hbox{ and } D_j=Z_j\setminus \bigcup\limits_{1\le r<j} D_r, j=2,...,k
$$
Notice that $D_i=Z_i=\varnothing$, $A_i\setminus f^{-1}(V_i)=\bigcup\limits_{j\neq i} A_i \cap f^{-1}(D_j)$,
and $\bigcup\limits_{j\neq i} D_j=V_i^c$. Then for
all $w\in A_i\setminus f^{-1}(V_i)$ it holds
$$
\|f(w)-x_i\|^p=\sum_{j\neq i} \|f(w)-x_i\|^p \ind_{A_i\cap f^{-1}(D_j)}(w)> 
\sum_{j\neq i} \|f(w)-x_j\|^p \ind_{A_i\cap f^{-1}(D_j)}(w)
$$
This shows that
$$
g=x_i \ind_{A_i\cap f^{-1}(V_i)}+\sum\limits_{j\neq i} x_j\ind_{A_j \cup (A_i\cap f^{-1}(D_j))} \in \mathscr{G}_{p,k},
$$
satisfies
$$
\|f-g\|_p^p <\|f-h\|_p^p,
$$
which is a contradiction and then $A_i\subset f^{-1}(V_i)$ holds $\mu$-a.s. proving that $g$ is in $f$-Voronoi form.

Now, we prove $h$ is in special $f$-Voronoi form. Assume that for some 
$j_0\in J$, $x_{j_0}$ is not a $p$-th mean for $f$ in $f^{-1}(D_{j_0})$, then for some
$\e>0$ it holds
$$
\underline M_p(f,A)+\e\le M_p(f,A)(x_{j_0}),
$$ 
and then there exists $x_{j_0}(\e)$ such that
$$
M_p(f,A)(x_{j_0}(\e))+\e/2\le M_p(f,A)(x_{j_0}),
$$ 
which will lead to a contradiction. Indeed, denote by 
$$
\tilde h=\sum\limits_{j\in J,j\ne j_0} x_j \ind_{f^{-1}(D_j)}+x_j(\e) \ind_{f^{-1}(D_{j_0})} \in\mathscr{G}_{p,k}(X),
$$
then
$$
\|f-h\|_p^p-\|f-\tilde h\|_p^p=\int_{f^{-1}(D_{j_0})}(\|f(w)-x_{j_0}\|_p^p-\|f(w)-x_{j_0}(\e)\|_p^p)\ \mu(dw)>\e/2,
$$
contradicting the minimality of $h$.
\end{proof}

\begin{lemma}\label{lemma1}
Let $(\Omega,\F,\mu)$ be a measure space, $X$ a Banach space, $p\in[1,\infty]$, $k\geq 1$ and $f\in L_p(X)$. 
Suppose that $h=\sum\limits_{i=1}^s x_i\ind_{A_i}$ is a minimizer for $f$ in reduced form in $\mathscr{G}_{p,k}(X)$, with
$\deg(h)=s\le k$. Consider
\begin{itemize}
    \item $\x=\{x_1,...,x_s\}$ and $\VS(\x)=\{V_i\}_{1\leq i\leq s}$;
    \item a measurable set $Z\subset\partial V_1$;
    \item $D_1(Z)=Z\cup \hbox{int}(V_1)$ and $D_i(Z)=V_i\setminus \bigcup\limits_{r=1}^{i-1} D_r(Z)$ for all $2\leq i\leq s$.
\end{itemize}
Then $h_Z=\sum\limits_{i=1}^{s} x_i \ind_{f^{-1}(D_i(Z))}$ is a minimizer for $f$ in $\mathscr{G}_{p,k}(X)$.
\end{lemma}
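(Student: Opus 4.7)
The plan is to establish the pointwise bound $\|f(w)-h_Z(w)\|\le \|f(w)-h(w)\|$ for $\mu$-almost every $w$, which after integrating (or taking essential suprema when $p=\infty$) forces $h_Z$ to be a minimizer as well, once we check that $h_Z\in\mathscr{G}_{p,k}(X)$.

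First I would verify that $(D_i(Z))_{i=1}^{s}$ is a measurable partition of $X$ with $D_i(Z)\subset V_i$ for every $i$. The inclusion $D_1(Z)\subset V_1$ uses that $V_1$ is closed (so $\partial V_1\subset V_1$), while $D_i(Z)\subset V_i$ for $i\ge 2$ is immediate from the definition. Disjointness is built into the recursion. For the covering property, any $y\in X$ belongs to some Voronoi cell; distinguishing the cases $y\in\mathrm{int}(V_1)$, $y\in Z$, and $y\in\partial V_1\setminus Z$ (in the last case, $y$ is equidistant from $x_1$ and some $x_j$ with $j\ge 2$, so belongs to $V_j$) and then picking the minimal index of a cell containing $y$ among $\{j\ge 2:y\in V_j\}$ shows that every $y$ lies in exactly one $D_i(Z)$. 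Measurability of $D_i(Z)$ follows from the measurability of $Z$, the openness of $\mathrm{int}(V_1)$, and the closedness of each $V_i$; strong measurability of $f$ then ensures each $f^{-1}(D_i(Z))$ is measurable, so $h_Z$ is a well-defined simple function with at most $s\le k$ pieces.

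For the pointwise inequality, fix $w$ outside a null set and let $i(w)$ be the unique index with $f(w)\in D_{i(w)}(Z)$, so that $h_Z(w)=x_{i(w)}$. Since $D_{i(w)}(Z)\subset V_{i(w)}$, the defining property of the Voronoi cell yields
$$
\|f(w)-h_Z(w)\|=\|f(w)-x_{i(w)}\|=\min_{1\le j\le s}\|f(w)-x_j\|,
$$
and since $h(w)=x_{j(w)}$ for the unique $j(w)$ with $w\in A_{j(w)}$, this minimum is bounded above by $\|f(w)-h(w)\|$. Passing to the $L^p$ norm gives $\|f-h_Z\|_p\le \|f-h\|_p$; the triangle inequality then shows $h_Z\in L^p$, hence $h_Z\in\mathscr{G}_{p,s}(X)\subset \mathscr{G}_{p,k}(X)$, and minimality of $h$ forces $\|f-h_Z\|_p=\mathscr{D}_{p,k}(f)$. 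The main obstacle is simply the somewhat fiddly verification of the partition property in the first step; the central inequality is essentially tautological from the definition of Voronoi cells and requires no use of Theorem \ref{the:special Voronoi} or of $h$ being in $f$-Voronoi form, which is what makes the argument work uniformly for all $p\in[1,\infty]$.
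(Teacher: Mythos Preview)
Your proof is correct and follows essentially the same approach as the paper's: both arguments rest on the inclusion $D_i(Z)\subset V_i$, which immediately yields the pointwise inequality $\|f(w)-x_{i(w)}\|\le \|f(w)-x_j\|$ for every $j$, and hence $\|f-h_Z\|_p\le\|f-h\|_p$. The paper compresses this into the observation that $\hbox{int}(V_i)\subset D_i(Z)\subset V_i$ and then sums over the double partition $(A_i\cap f^{-1}(D_j(Z)))_{i,j}$, whereas you spell out the partition property of $(D_i(Z))_i$ and the membership $h_Z\in L^p(X)$ more carefully; the underlying idea is identical.
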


\begin{proof}
Notice that $\hbox{int}(V_i)\subset D_i(Z)\subset V_i$ and then if $w\in A_i\cap f^{-1}(D_j(Z))$ it holds that
$$
\|f(w)-x_j\|\le \|f(w)-x_i\|.
$$
So, if $p<\infty$, we get
$$
\|f-h\|_p^p=\sum\limits_{i,j} \|(f-x_i)\ind_{A_i\cap f^{-1}(D_j(Z))}\|_p^p\ge
\sum\limits_{i,j} \|(f-x_j)\ind_{A_i\cap f^{-1}(D_j(Z))}\|_p^p= \|f-h_Z\|_p^p,
$$
and then $g_Z$ is also a minimizer. If $p=\infty$, the conclusion also holds with minor adjustements. 
\end{proof}

\begin{proposition} 
\label{pro:simple special}
Let $(\Omega,\F,\mu)$ be a measure space. Assume that $X$ is a Banach space with a G\^ateaux differentiable 
norm $\|\hspace{0.05cm}\|$. Let $f\in L^p(X)$ with $1<p<\infty$. If 
$h$ is a minimizer for $f$ in $\mathscr{G}_{p,k}(X)$, 
then the reduced form of $h$ is in simple special $f$-Voronoi form.
\end{proposition}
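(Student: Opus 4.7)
The strategy is to prove $\mu(f^{-1}(\partial V_i))=0$ for each $i$; by Theorem~\ref{the:special Voronoi} the reduced form $h=\sum_{i=1}^s x_i\ind_{A_i}$ is already in special $f$-Voronoi form, so only this remains. Writing $\partial V_i=\bigcup_{j\ne i}E_{ij}$ with $E_{ij}=V_i\cap V_j\cap\{y:\|y-x_i\|=\|y-x_j\|\}$ as a finite union, it suffices to prove $\mu(f^{-1}(E_{ij}))=0$ for each pair $i\ne j$. I first reduce to $X$ separable by replacing $X$ with $Y=\overline{\text{span}}(f(\Omega)\cup\{x_1,\ldots,x_s\})$, separable since $f$ has essentially separable range; when $\mu(\Omega)=\infty$ I also arrange $x_1=0$ with $\mu(A_i)<\infty$ for $i\ge 2$ (Remark~\ref{rem:1}).

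The analytic heart of the argument is a first-order optimality condition for $p$-th means. Since $\|\cdot\|$ is G\^ateaux differentiable and $p>1$, $y\mapsto\|y\|^p$ is G\^ateaux differentiable everywhere on $X$, with derivative $p\|y\|^{p-1}J_y$ at $y\ne 0$ and $0$ at $y=0$, where $J_y\in X^*$ denotes the unique norm-one support functional at $y$. The bound $|\|y-th\|^p-\|y\|^p|\le p|t|\|h\|(\|y\|+\|h\|)^{p-1}$ together with dominated convergence permits differentiation under the integral of $x\mapsto M_p(f,A)(x)$ on any finite-measure cell $A$; by convexity of $M_p(f,A)$, $x$ is a $p$-th mean of $f$ in $A$ if and only if
\begin{equation}
\int_A\|f(w)-x\|^{p-1}J_{f(w)-x}(h)\,d\mu(w)=0\qquad\forall h\in X.\label{eq:planEuler}
\end{equation}

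For the contradiction, assume $\mu(f^{-1}(E_{ij}))>0$ for some $i\ne j$ and pick $\ell\in\{i,j\}$ with $x_\ell\ne 0$ (possible since $x_i\ne x_j$); since $0=x_1\in V_1$, the inequality $\|y-x_\ell\|\le\|y\|$ on $V_\ell$ gives $\|y\|\ge\|x_\ell\|/2$, so $f^{-1}(V_\ell)$ has finite measure. An adaptation of Lemma~\ref{lemma1} (its proof is symmetric in the indices) shows that any measurable partition $\{D_k\}_{k=1}^s$ of $X$ with $\text{int}(V_k)\subset D_k\subset V_k$ yields a minimizer $\sum_k x_k\ind_{f^{-1}(D_k)}$. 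Given measurable $Z\subset E_{ij}$, I construct two such partitions that differ only in whether $Z$ sits in cell $\ell$ or in the other cell $\ell'\in\{i,j\}\setminus\{\ell\}$. The observation $\mu(f^{-1}(\text{int}(V_\ell)))>0$ --- if not, $A_\ell$ would lie a.e.\ in $f^{-1}(\partial V_\ell)$ and each $w\in A_\ell$ could be reassigned to a neighbouring cell without changing the $L^p$-distance (since $f(w)$ is equidistant from $x_\ell$ and some $x_k$, $k\ne\ell$), yielding a minimizer of degree $\le s-1<k$ and forcing $f\in\mathscr{G}_{p,s-1}$ via Proposition~\ref{pro:f in G_pk}, hence $h=f$ a.e.\ with $\deg(f)\le s-1<s=\deg(h)$, a contradiction --- guarantees that both modified cells have positive measure. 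Applying~\eqref{eq:planEuler} at $x_\ell$ on both and subtracting gives
\[
\int_{f^{-1}(Z)}\|f(w)-x_\ell\|^{p-1}J_{f(w)-x_\ell}(h)\,d\mu(w)=0\qquad\forall h\in X.
\]
Varying $Z$ over measurable subsets of $E_{ij}$ and using separability of $X$ to absorb a countable dense family of test vectors into a single $\mu$-null set, I conclude that the functional $\|f(w)-x_\ell\|^{p-1}J_{f(w)-x_\ell}$ vanishes on $X$ for $\mu$-a.e.\ $w\in f^{-1}(E_{ij})$. But on $E_{ij}$, $f(w)\in\partial V_\ell$ while $x_\ell\in\text{int}(V_\ell)$, so $f(w)\ne x_\ell$ and $J_{f(w)-x_\ell}$ has norm one --- contradiction.

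The main obstacle will be the analytic step: rigorously justifying the differentiation of $M_p(f,A)$ under the integral sign and then passing from ``\eqref{eq:planEuler} holds for every $h$'' to a pointwise almost-everywhere statement, where the separability reduction is essential. A secondary subtlety is ensuring the two modified cells remain of positive measure so that~\eqref{eq:planEuler} yields nontrivial information on both sides of the subtraction, which is handled by the Proposition~\ref{pro:f in G_pk} argument sketched above.
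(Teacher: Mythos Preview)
Your approach is correct and genuinely different from the paper's. Both proofs begin the same way (Theorem~\ref{the:special Voronoi} for special $f$-Voronoi form, Lemma~\ref{lemma1} to produce pairs of minimizers differing only on a boundary piece, and the first-order condition for $p$-th means) but diverge at the contradiction step. The paper localizes: it finds a point $y\in\partial V_1$ and a small ball $B(y,\e)$ with $\mu(f^{-1}(\partial V_1\cap B(y,\e)))>0$, then tests the single direction $v=y-x_1$ and shows by a direct geometric estimate that the convex function $T(a)=\int_{f^{-1}(Z_1)}\|f-(x_1+av)\|^p\,d\mu$ strictly decreases for some $a>0$, contradicting $T'(0)=0$. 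Your route instead varies $Z$ over \emph{all} Borel subsets of $E_{ij}$ and $h$ over \emph{all} directions, forcing the integrand itself to vanish a.e., which collides with $\|J_{f(w)-x_\ell}\|=1$. Your argument avoids the $\e$-ball localization and the explicit inequality $\|f(w)-x^*\|\le\|f(w)-x_1\|-\e$; the paper's argument in exchange needs only one test direction and no pointwise-a.e.\ deduction.

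One point to make explicit when you write it up: from $\int_{f^{-1}(Z)}\phi_h\,d\mu=0$ for all Borel $Z\subset E_{ij}$ you cannot directly conclude $\phi_h=0$ $\mu$-a.e.\ on $f^{-1}(E_{ij})$, since the sets $f^{-1}(Z)$ do not exhaust the $\sigma$-algebra on $f^{-1}(E_{ij})$. What saves you is that $\phi_h(w)=\Psi_h(f(w))$ factors through $f$ (with $\Psi_h(y)=\|y-x_\ell\|^{p-1}\partial_h\|y-x_\ell\|$ Borel on $X$), so $\{\phi_h>0\}=f^{-1}(\{\Psi_h>0\})$ is itself of the form $f^{-1}(Z)$, and the vanishing follows. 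This is routine but should be said.
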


\begin{proof} 
The result is obvious if $k=1$, so let us suppose that $k>1$. There exist $s\leq k$, $A_1,...,A_s\in\F$ and $x_1,...,x_s\in X$ such that the reduced form of $h$ is 
$$
h=\sum\limits_{i=1}^s x_i\ind_{A_i}.
$$
Let $\x=\{x_1,...,x_s\}$ and $\VS(\x)=\{V_i\}_{1\leq i\leq s}$.

Suppose first that $f\in \mathscr{G}_{p,k}(X)$. Then $f=h$ holds $\mu$-a.e. and we have that
$$
h=\sum\limits_{i=1}^s x_i\ind_{f^{-1}(\{x_i\})},.
$$ which is in special $f$-Voronoi form by the previous result. Since $\mu(f^{-1}(\{x_1,...,x_s\}^c))=0$, we have that $f^{-1}(\{x_i\})=f^{-1}(\text{int}(V_i))$ $\mu$-a.e. and then the reduced form of $h$ is in simple special $f$-Voronoi form.

Now we suppose that $f\notin \mathscr{G}_{p,k}(X)$. From Proposition \ref{the:special Voronoi}, we have that the reduced form of $h$ 
is in special $f$-Voronoi form and $s=k$. If for some $1\leq i\leq k$, it holds that $\mu(f^{-1}(\hbox{int}(V_i)))=0$, we could absorb the boundary
of $V_i$ into the other cells in order to get a minimizer $\tilde h$ with degree smaller than $k$, which
contradicts Proposition \ref{the:special Voronoi}, and then for all $1\leq i\leq k$ it holds
$$
\mu(f^{-1}(\hbox{int}(V_i)))>0.
$$
We need to prove that $\mu(f^{-1}(\partial V_i))=0$ for all $1\leq i\leq k$. 

Suppose that $\mu$ is finite. Assume that for some $1\leq i\leq k$ it holds $\mu(f^{-1}(\partial V_i))>0$. Without loss of generality, we can assume $i=1$. Consider $Y=\overline{span}\{f(\Omega),\x\}$, which is a separable closed subspace of $X$. Consider a dense countable set $(y_n)_n\subset \partial V_1\cap Y$. Since $x_1\in \hbox{int}(V_1)$, there exists $\e_0>0$ such that $B(x_1,4\varepsilon_0)\subset V_1$. Note that $\|x_1-y_n\|\geq 4\varepsilon_0$ for all $n\in\mathbb N$.
Since 
$$
\mu(f^{-1}(\partial V_1\setminus Y))=0,
$$ 
we have $\mu(f^{-1}(\partial V_1\cap Y))=\mu(f^{-1}(\partial V_1))>0$. Fix $\varepsilon\in(0,\varepsilon_0]$. There exists $n\in\mathbb N$ so that 
$$
\mu(f^{-1}(\partial V_1\cap B(y_n,\e)))>0.
$$
We consider $Z_1=\partial V_1\cap B(y,\e)$, where $y=y_n\in \partial V_1$ and $Z_2=\emptyset$. Both functions $h_1=h_{Z_1}$ and $h_2=h_{Z_2}$ 
are minimizers by Lemma \ref{lemma1}. Note that $\mu(f^{-1} (\hbox{int}(V_1)\cup Z_1))\geq\mu(f^{-1}(\hbox{int}(V_1)))>0$. It follows that $x_1$ is a $p$-th mean of $f$ in both $f^{-1}(\hbox{int}(V_1))$ and 
$f^{-1}(\hbox{int}(V_1)\cup Z_1)$ (see Proposition \ref{the:special Voronoi}). Defining $v=y-x_1$ and using Corollary \ref{cor:charac_pmean}, we deduce that the real functions
$$
\begin{array}{l}
R(a)=\int_{f^{-1}({\small\hbox{int}}(V_1)\cup Z_1)} \|f(w)-(x_1+av)\|^p \ \mu(dw),\\
\\
S(a)=\int_{f^{-1}({\small\hbox{int}}(V_1))} \|f(w)-(x_1+av)\|^p \ \mu(dw)\\
\\
T(a)=R(a)-S(a)=\int_{f^{-1}(Z_1)} \|f(w)-(x_1+av)\|^p \ \mu(dw)
\end{array}
$$
are convex and differentiable. Moreover, $R$ and $S$ have a minimum 
at $0$ and we have that
$$
\begin{array}{l}
0=R'(0)=\int_{{\small\hbox{int}}(V_1)\cup Z_1} p\|f(w)-x_1\|^{p-1} \partial_v\|f(w)-x_1\|\ \mu(dw)\\
\\
0=S'(0)=\int_{{\small\hbox{int}}(V_1)} p\|f(w)-x_1\|^{p-1} \partial_v \|f(w)-x_1\| \ \mu(dw).
\end{array}
$$
This shows that $T'(0)=0$. Since $T$ is convex, it follows that $0$ is also a minimum of $T$. Let us show that this is not possible. Choose $a\in (0,1)$ so that $a\|x_1-y\|>3\e$ and consider $x^*=x_1+a(y-x_1)$. If $w\in f^{-1}(Z_1)$, we have that $f(w)\in \partial V_1 \cap B(y,\e)$ and then 
$$
\begin{array}{lll}
a\|x_1-y\|+(1-a)\|x_1-y\|&=&a\|x_1-y\|+\|y-(x_1+a(y-x_1))\|=a\|x_1-y\|+\|y-x^*\|\\
&=&\|y-x_1\|\le \|f(w)-x_1\|+\e,
\end{array}
$$
which implies
$$
a\|x_1-y\|+\|f(w)-x^*\|\le a\|x_1-y\|+\|y-x^*\|+\e= \|y-x_1\|+\e\le \|f(w)-x_1\|+2\e.
$$
It follows that for all $w\in f^{-1}(Z_1)$
$$
\|f(w)-x^*\|-\e\le \|f(w)-x_1\|.
$$
This implies that 
$$
T(a)<T(0),
$$
which is a contradiction. Then $\mu(f^{-1}(\partial V_1))=0$ and the result follows in case $\mu$ is finite.


Now, we assume $\mu(\Omega)=\infty$. By Remark \ref{rem:1}, we can assume that $x_1=0$. Let $Z=\emptyset$  and define
$$
h_Z=\sum\limits_{i=1}^{k} x_i \ind_{f^{-1}(D_i(Z))},
$$
as in the previous lemma. Then $h_Z$ is also a minimizer, $\mu(f^{-1}(D_1(Z)))=\infty$ and for all $j\ge 2$ 
we have 
$$
0<\mu(f^{-1}(D_j(Z)))<\infty.
$$
The first inequality holds thanks to Theorem \ref{the:special Voronoi} since $\deg(g_Z)=k$, and the second inequality holds
because $x_j\neq 0$ is a $p$-th mean of $f$ in $f^{-1}(D_j(Z))$ and therefore $\mu(f^{-1}(D_j(Z)))<\infty$. 
Then, $G=\bigcup_{\ge 2} V_j=(\text{int}(V_1))^c$ satisfies $\mu(f^{-1}(G)<\infty$, we can proceed as before to prove that
$\mu(f^{-1}(\partial V_j))=0$ for all $j\ge 2$. Since $\partial V_1\subset \bigcup_{j\ge 2} \partial V_j$, we 
also conclude $\mu(f^{-1}(\partial V_1))=0$ and the result is shown.
\end{proof}

\section{Approximate-compactness, proof of Theorem \ref{the:mainresult II}}
\label{sec:proof main II}

Before starting with the proof of Theorem \ref{the:mainresult II}, we would like to remind some results about the RNP. Let $X$ be a Banach space, $(\Omega,\F,\mu)$ a $\sigma$-finite measure space and $1\leq p<\infty$. It is well-known that $X^*$ has the RNP with respect to  $(\Omega,\F,\mu)$ if and only if $(L^p(\Omega,\F,\mu,X))^*=L^q(\Omega,\F,\mu,X^*)$ where $q$ is such that $\frac1p+\frac1q=1$ (see Theorem 1.3.10 in \cite{martingale}). Moreover, in case $1<p<\infty$, the $\sigma$-finiteness condition can be dropped: if $X^*$ has the RNP then $(L^p(\Omega,\F,\mu,X))^*=L^q(\Omega,\F,\mu,X^*)$ for any measure space $(\Omega,\F,\mu)$ (see Corollary 1.3.13 in \cite{martingale}). This result will be used in the following proof. 

\begin{proof} ({\bf Theorem \ref{the:mainresult II}}) 
Consider a Banach space $Y$ with the RNP such that $X=Y^*$. It follows that $(L^q(Y))^*=L^p(Y^*)=L^p(X)$, where $q$ is such that $\frac1p+\frac1q=1$.

Assume $f\in L^p(X)$ and $(h_n)_n$ is a minimizing sequence. If 
$f\in \mathscr{G}_{p,k}(X)$, then clearly $\|f-h_n\|_p\to 0$ and the result holds in this case. 
So, in what follows we assume $f\notin \mathscr{G}_{p,k}(X)$. According to Proposition \ref{pro:bounded}, we can assume that $(h_n)_n$ is uniformly bounded.
Consider a reduced form of $h_n$ given by
$$
h_n=\sum\limits_{i=1}^{\ell(n)} x_{i,n} \ind_{A_{i,n}}.
$$
Passing to a subsequence if necessary, we can assume that $\ell(n)=\ell\le k$. By $w^*$-sequential compactness and taking a subsequence if necessary, we can suppose that
$$
x_{i,n}\xrightarrow[]{w^*} x_i.
$$
Let ${\bf y}=\{y_1,...,y_s\}$ the set formed by the different elements in $x_1,...,x_\ell$. For $1\leq j\leq s$ and $n\in\mathbb N$, let $I_j=\{i: x_{i,n}\xrightarrow[]{w} y_j\}$ and $B_{j,n}=\bigcup\limits_{i\in I_j} A_{i,n}$. Then, for all $n\in\mathbb N$, we define
$$
g_{n}=\sum\limits_{j=1}^s y_j \ind_{B_{j,n}}\in \mathscr{G}_{p,s}(X).
$$
The proof of Theorem \ref{the:mainresult I} shows that $(g_n)_n$ admits a minimizing subsequence. Consider  $\VS({\bf y})=\{V_j\}_{1\le j \le s}$ the Voronoi cells associated to
${\bf y}$. The proof of Proposition \ref{pro:simple special} shows that
$$
g=\sum\limits_{j=1}^s y_j \ind_{G_j}\in \mathscr{G}_{p,s}(X)
$$
is a minimizer in special $f$-Voronoi form, where $G_1=V_1$ and $G_j=V_j\setminus\bigcup_{m<j}G_m$ for $2\leq j\leq s$. According to Proposition \ref{the:special Voronoi}, we have $g$ is in simple special $f$-Voronoi form and $s=\ell=k$. It follows that $x_1,...,x_k$ are all different and
$$
h_n=\sum\limits_{i=1}^k x_{i,n} \ind_{A_{i,n}},\ \, g_n=\sum\limits_{i=1}^k x_{i} \ind_{A_{i,n}}\ \text{ and } \, 
g=\sum\limits_{i=1}^k x_{i} \ind_{D_i},
$$
where $D_i=\hbox{int}(V_i)$ for all $1\leq i\leq k$ (since $\mu(\partial V_i)=0$). To conclude, we will show that $h_n\xrightarrow[]{w^*}g$ in $L^p(X)$. So let $\wp\in L^q(Y)$ and let us prove that $\langle\wp,h_n-g\rangle\to 0$. For all $w\in\Omega$, we have
$$
\langle\wp(w),h_n(w)-g(w)\rangle=\sum\limits_{i,j} \langle\wp(w),x_{j,n}-x_i\rangle \ind_{A_{j,n}\cap D_i}(w), 
$$
showing that
\begin{align*}
\langle\wp,h_n-g\rangle=\int |\langle\wp(w),h_n(w)-g(w)\rangle| \ \mu(dw)&=
\sum\limits_i \int_{A_{i,n}\cap f^{-1}(D_i)} |\langle\wp(w),x_{i,n}-x_i\rangle| \ \mu(dw) \\
&+\sum\limits_{i\neq j} \int_{A_{j,n}\cap f^{-1}(D_i)} |\langle\wp(w),x_{j,n}-x_i\rangle| \ \mu(dw) \\
&:=\alpha_n+\beta_n.
\end{align*}
To conclude, we need to show that $\alpha_n\to 0$ and $\beta_n\to 0$.

\underline{\textbf{Case 1:}} $\mu(\Omega)<\infty$. Let us prove that $\alpha_n\to 0$. For every $i$ and $w\in\Omega$, $|\langle\wp(w),x_{i,n}-x_i\rangle| \ind_{A_{i,n}\cap f^{-1}(D_i)}(w)$ converges to $0$ because
$x_{i,n}\xrightarrow[]{w} x_i$.  This sequence is dominated by
$$
 |\langle\wp(w),x_{i,n}-x_i\rangle| \ind_{A_{i,n}\cap f^{-1}(D_i)}(w)\le \|\wp(w)\|_{X^*}\|x_{i,n}-x_i\|\ind_{f^{-1}(D_i)}(w)
 \le 2M \|\wp(w)\|_{X^*} \ind_{f^{-1}(D_i)}(w), 
$$
where $M=\sup\limits_n\max\{\|x_{i,n}\|: 1\le i\le k\}$. The function $\|\wp\|_{X^*} \ind_{f^{-1}(D_i)}$ 
belongs to $L^1$, because $\|\wp\|_{X^*}\in L^q$ and $f^{-1}(D_i)$ has finite measure. So the dominated convergence theorem implies that 
$$
\sum\limits_i \int_{A_{i,n}\cap f^{-1}(D_i)} |\langle\wp(w),x_{i,n}-x_i\rangle| \ \mu(dw)\to 0,
$$
from which we deduce that $\alpha_n\to 0$.

Now we show that $\beta_n\to 0$. Fix $1\le i\le k$ and $j\neq i$. First, we will prove that $\lim\limits_{n\to \infty} \mu(A_{j,n}\cap f^{-1}(D_i))=0$. 
For $y\in D_i$, it holds
$$
\|x_i-y\|<\|x_j-y\|,
$$
which proves that $D_i=\bigcup_{s\ge 0} C_s$, where
$$
C_0=\{y\in D_i: \|x_j-y\|^p-\|x_i-y\|^p>1\}$$
$$C_s=\{y\in D_i: 2^{-s+1}\ge \|x_j-y\|^p-\|x_i-y\|^p>2^{-s}\}$$for all $s\geq 1$. Consider the sequence
\begin{equation}
\label{eq:hhat}
\hat h_n=x_i \ind_{A_{j,n}\cap f^{-1}(D_i)}+ x_j \ind_{A_{j,n}\setminus f^{-1}(D_i)}+
\sum\limits_{l\neq j} x_l\ind_{A_{l,n}}.
\end{equation}
Since for all $w\in A_{j,n}\cap f^{-1}(D_i)$, we have $\|f(w)-x_j\|\ge \|f(w)-x_i\|$, we deduce
that $(\hat h_n)_n$ is also a minimizing sequence and moreover
\begin{equation}
\label{eq:h vs hhat}
\|f-h_n\|_p^p= \|f-\hat h_n\|_p^p+\int_{A_{j,n}\cap f^{-1}(D_i)} \|f(w)-x_j\|^p-\|f(w)-x_i\|^p \ \mu(dw).
\end{equation}
By decomposing the last integral according to $F_t=\bigcup_{0\le s\le t} C_s$, we obtain that
\begin{equation}
\label{eq:F_t}
2^{-t}\mu(A_{j,n} \cap f^{-1}(F_t))\le \int_{A_{j,n}\cap f^{-1}(D_i)} 
\|f(w)-x_j\|^p-\|f(w)-x_i\|^p \ \mu(dw)\to 0,
\end{equation}
and then
$$
\lim\limits_{n} \mu(A_{j,n}\cap f^{-1}(F_t))=0,
$$
showing that
$$
\limsup\limits_{n\to \infty} \mu(A_{j,n}\cap f^{-1}(D_i))\le \lim\limits_{t\to \infty}\mu(f^{-1}(F_\infty\setminus F_t))=0,
$$
and then for all $j\ne i$
$$
\lim\limits_{n\to \infty} \mu(A_{j,n}\cap f^{-1}(D_i))=0.
$$
Now, using H\"older's inequality, we have 
$$
\int_{A_{j,n}\cap D_i} |\langle\wp(w),x_{j,n}-x_i\rangle| \ \mu(dw)\le 2M \|\wp\|_q \left(\mu(A_{j,n}\cap D_i)\right)^{1/p}\to 0
$$
and it follows that $\beta_n\to 0$. This finishes the proof when $\mu$ is finite.

\underline{\textbf{Case 2:}} $\mu(\Omega)=\infty$. We assume that $x_1=0$ and $x_{1,n}=0$ for all $n\in\mathbb N$ (see Remark \ref{rem:1}). Recall that for all $j$, we have $\mu(\partial V_j)=0$ and also
$$
0<\mu(f^{-1}(D_i))<\infty,\ \, 0<\mu(A_{i,n})<\infty \text{ for all } 2\leq i\leq k \hbox{ and } \mu(f^{-1}(D_1))=\mu(A_{1,n})=\infty.
$$

Let us show that $\alpha_n\to 0$. Since $x_{1,n}=x_1=0$, we have
$$
\alpha_n=\sum\limits_{i\ge 2} \int_{A_{i,n}\cap f^{-1}(D_i)} |\wp(w)(x_{i,n}-x_i)| \ \mu(dw).
$$
Notice that $\mu(f^{-1}(\cup_{i\ge 2}D_i))=\mu(f^{-1}(D_1^c))\le \mu(f^{-1}(B(0,\e)^c))<\infty$. Then, using the same arguments as before, we deduce that $\alpha_n\to 0$.

To prove that $\beta_n\to 0$, we will show that for all $i\neq j$ it holds
$$
\lim\limits_{n\to \infty}\mu(A_{j,n}\cap f^{-1}(D_i))=0.
$$
The argument is the same as above when $i\ge 2$. Now suppose that $i=1$ and $j\neq 1$. Since $x_1=0\in D_1$, there exists $\varepsilon>0$ such that $B(0,\e)\subset D_1$. We can also assume that $d(0,\partial V_1)> 3\e$. In this way, we have $\|y-x_j\|\ge 2\e$ for all $y\in B(0,\e)$.
The Markov's inequality implies
$$
\e^p  \mu(f^{-1}((B(0,\e))^c)) \le \int_{\|f\|\ge \e} \|f(w)\|^p \ \mu(dw)\le \|f\|_p^p, 
$$
showing that $\mu(f^{-1}((B(0,\e))^c))<\infty$. Define $\hat h_n$ as in  \eqref{eq:hhat}. Again, $(\hat h_n)_n$ is a minimizing sequence and from 
\eqref{eq:h vs hhat}, we get
$$
\lim\limits_n \int_{A_{j,n}\cap f^{-1}(D_1)} \|f(w)-x_j\|^p-\|f(w)\|^p \ \mu(dw)=0.
$$
Notice that if $\|f(w)\|\le \e$ then $\|f(w)-x_j\|\ge 2\e$, this implies
$$
\int_{A_{j,n}\cap f^{-1}(B(0,\e))} \|f(w)-x_j\|^p-\|f(w)\|^p \ \mu(dw)\ge (2^p-1)\e^p \mu(A_{j,n}\cap f^{-1}(B(0,\e))),
$$
which implies that $\mu(A_{j,n}\cap f^{-1}(B(0,\e)))\to 0$. Now consider $$G_t=\{z\in D_1\setminus B(0,\e): \|z-x_j\|^p-\|z\|^p\ge 2^{-t}\}$$ for all $t\geq 0$. 
We point out that $G_t\uparrow G_\infty:= D_1\setminus B(0,\e)$ as $t\uparrow \infty$ and $\mu(f^{-1}(G_\infty))<\infty$. On the other hand, for each fixed $t\ge 1$, 
it holds (see also \eqref{eq:F_t})
$$
2^{-t}\mu(A_{j,n} \cap f^{-1}(G_t))\le \int_{A_{j,n}\cap f^{-1}(D_1)} \|f(w)-x_j\|^p-\|f(w)\|^p \ \mu(dw)\to 0,
$$
and then 
$$
\limsup\limits_{n\to \infty} \mu(A_{j,n} \cap f^{-1}(D_1))=\limsup\limits_{n\to \infty}
 \mu(A_{j,n} \cap f^{-1}(D_1\setminus B(0,\e)))\le \lim\limits_{t\to \infty} \mu(f^{-1}(G_\infty\setminus G_t))=0.
$$
So, we have proved that for all $j\neq i$ $$
\lim\limits_{n\to \infty} \mu(A_{j,n}\cap f^{-1}(D_i))=0.
$$ By H\"older's inequality, it follows that
$$
\beta_n=\sum\limits_{i\neq j} \int_{A_{j,n}\cap D_i} |\wp(w)(x_{j,n}-x_i)| \ \mu(dw)
\le 2M \|\wp\|_q \left(\sum\limits_{i\neq j} \mu(A_{j,n}\cap D_i)\right)^{1/p} \to 0
$$
and the proof is complete.
\end{proof}

\begin{proof} (Corollary \ref{cor:norm-compact})
Assume $f\in L^p(X)$ and $(h_n)_n$ be a minimizing sequence. Since $X$ is uniformly convex, 
it is reflexive (see Theorem 9.11 in \cite{BST} for example) and then fulfills the hypothesis 
of the previous theorem. Then, passing to a subsequence if necessary, it exists $g\in L^p(X)$ such that 
$h_n\xrightarrow[]{w} g$ and $g$ is a minimizer for $f$. In particular, we have that $\|h_n-f\|_p\to\|g-f\|_p$. 
Since $X$ is uniformly convex, then $L^p(X)$ is also uniformly convex and 
it follows that $(h_n-f)_n$ converges in $L^p(X)$ to $g-f$ (see Proposition 3.32 in \cite{Brezis}), 
showing that $(h_n)_n$ converges in $L^p(X)$ to $g$.
\end{proof}

\appendix
\section{The $p$-means}
\label{app:pmeans}

\begin{proposition}
Let $(\Omega,\F,\mu)$ be a measure space, $X$ a Banach space, $f\in L^p(X)$. Then:
\begin{enumerate}
    \item[(a)] the function $M_p(f,A)$ is convex, coercive and continuous in the following cases:
    \begin{enumerate}
        \item[(i)] $p=\infty$;
        \item[(ii)] $p<\infty$ and $\mu(A)<\infty$
    \end{enumerate}
    \item[(b)] if $p<\infty$ and $\mu(A)=\infty$, then $M_p(f,A)(x)=\infty$ for all $x\neq 0$ and $M_p(f,A)(0)\le \|f\|_p^p$
\end{enumerate}
\end{proposition}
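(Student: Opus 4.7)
The plan is to handle each case by direct computation from the definition of $M_p(f,A)$, using convexity of $x\mapsto\|f(w)-x\|^p$ pointwise, the reverse triangle inequality for coercivity, and the dominated convergence theorem (or Lipschitz estimates) for continuity.

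For part (a) with $p<\infty$ and $\mu(A)<\infty$, convexity follows pointwise from convexity of $x\mapsto\|f(w)-x\|^p$ (true since $\|\cdot\|$ is a convex norm and $t\mapsto t^p$ is convex nondecreasing on $[0,\infty)$), integrated over $A$. For coercivity I would use Minkowski: $M_p(f,A)(x)^{1/p}\ge \|x\|\,\mu(A)^{1/p}-\|f\ind_A\|_p$, which tends to infinity as $\|x\|\to\infty$ since $\mu(A)<\infty$. For continuity, given $x_n\to x$ in $X$, apply dominated convergence: $\|f(w)-x_n\|^p\to\|f(w)-x\|^p$ pointwise, with domination $\|f(w)-x_n\|^p\le 2^{p-1}(\|f(w)\|^p+C^p)$ for $C=\sup_n\|x_n\|<\infty$, and the right-hand side is integrable on $A$ because $f\in L^p$ and $\mu(A)<\infty$.

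For part (a) with $p=\infty$, convexity of $x\mapsto\|(f-x)\ind_A\|_\infty$ follows from the triangle inequality applied to a convex combination and positive homogeneity. For coercivity, the reverse triangle inequality gives $\|(f-x)\ind_A\|_\infty\ge\|x\|-\|f\ind_A\|_\infty\to\infty$. For continuity, the same reverse triangle inequality (applied essentially) yields the $1$-Lipschitz estimate $|M_\infty(f,A)(x)-M_\infty(f,A)(y)|\le\|x-y\|$.

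For part (b), when $x=0$ the bound $M_p(f,A)(0)=\int_A\|f\|^p\,d\mu\le\|f\|_p^p$ is immediate. For $x\neq 0$, by Markov's inequality $\mu(\{\|f\|\ge\|x\|/2\})\le (2/\|x\|)^p\|f\|_p^p<\infty$, so $A\cap\{\|f\|<\|x\|/2\}$ has infinite measure; on this set $\|f(w)-x\|\ge\|x\|/2$, giving $M_p(f,A)(x)=\infty$. The only mildly delicate point is invoking integrability for the dominated-convergence argument in continuity, which is why the hypothesis $\mu(A)<\infty$ is essential in case (ii); none of the steps should present a real obstacle.
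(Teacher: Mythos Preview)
Your proposal is correct and follows essentially the same approach as the paper. The only minor differences are that the paper establishes continuity for $p<\infty$ via the direct Lipschitz estimate $\bigl|(M_p(f,A)(x))^{1/p}-(M_p(f,A)(y))^{1/p}\bigr|\le\|x-y\|\,\mu(A)^{1/p}$ rather than dominated convergence, and handles part (b) for $x\neq 0$ by simply reading $M_p(f,A)(x)=\infty$ off the coercivity inequality (since $\mu(A)^{1/p}=\infty$) rather than via Markov's inequality.
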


\begin{proof}
$(a)$ Convexity follows from the convexity of the norm on $X$ and
convexity of the $p$-th power
\begin{equation}
\label{eq:convex pth mean}
\begin{array}{ll}
M_p(f,A)(\alpha x+(1-\alpha) y)\hspace{-0.2cm}&\le \int_A (\alpha \|f(w)-x\|+(1-\alpha)\|f(w)-y\|)^p \ \mu(dw)\\
\\
\hspace{-0.2cm}&\le \int_A \alpha \|f(w)-x\|^p+(1-\alpha)\|f(w)-y\|^p \ \mu(dw)\\
\\
\hspace{-0.2cm}&= \alpha M_p(f,A)(x)+(1-\alpha)M_p(f,A)(y).
\end{array}
\end{equation}
On the other hand, we have that
\begin{equation}
\label{eq:coer_p}
\|x\|\mu(A)^{1/p}\le \|(f-x)\ind_A\|_p+\|f\|_p=\big(M_p(f,A)(x)\big)^{1/p}+\|f\|_p,
\end{equation}
showing that $M_p(f,A)$ is coercive. The case of $p=\infty$ follows similarly and
moreover
\begin{equation}
\label{eq:coer_infty}
\|x\|\le M_\infty(f,A)(x)+\|f\|_\infty.
\end{equation}
Let us show that $M_p(f,A)$ is continuous. In fact, consider $x,y \in X$, we have for $p<\infty$
$$
\left|\left(M_p(f,A)(x)\right)^{1/p}-\left(M_p(f,A)(y)\right)^{1/p}\right|=\big|\|(f-x)\ind_A\|_p-\|(f-y)\ind_A\|_p\big|
\le \|x-y\|\mu(A)^{1/p} 
$$
and for $p=\infty$
$$
\big|M_\infty(f,A)(x)-M_\infty(f,A)(y)\big|=\big|\|(f-x)\ind_A\|_\infty-\|(f-y)\ind_A\|_\infty\big|\le \|x-y\|,
$$
showing the desired continuity.

$(b)$ When $\mu(A)=\infty$ and $p<\infty$, we have $M_p(f,A)(0)=\int_A \|f(w)\|^p \ \mu(dw)\le \|f\|_p^p$
and from \eqref{eq:coer_p}
$$
\forall x\neq 0: \hspace{0.1cm} M_p(f,A)(x)=\infty.
$$
\end{proof}

We summarize some properties of the $p$-th means in the following result

\begin{proposition} Let $(\Omega,\F,\mu)$ be a measure space, $X$ a Banach space, $f\in L^p(X)$ and $A\in\F$ such that $\mu(A)>0$ and, if $p<\infty$, we also assume that $\mu(A)<\infty$. 
\begin{enumerate} 
\item[(a)] For all $\e>0$ the set of $\e$-$p$-th means is a not empty closed, convex and bounded set. 
\item[(b)] The set of $p$-th means is closed, bounded and convex.
\item[(c)] Assume $X$ is a dual space with $w^*$-sequentially compact unit ball and $p<\infty$ 
then there exists a $p$-th mean for $f\in L^p$ in $A$. 
\item[(d)] Suppose that $1<p<\infty$. If the norm $\|\hspace{0.1cm}\|$ in $X$ is strictly convex or if $\mu\circ f^{-1}$ 
satisfies: for all $x\in X$
and all $\rho>0$ 
$$
\mu(f^{-1}(B(x,\rho)))>0,
$$
Then, there is at most one $p$-th mean for $f$ in $A$.
\end{enumerate}
\end{proposition}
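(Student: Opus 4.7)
The four statements all rest on the properties of $M_p(f,A)$ proved in the previous proposition: convexity, continuity, and coercivity (valid under the hypothesis $\mu(A) < \infty$ or $p = \infty$). For parts (a) and (b), I plan to identify the set of $\e$-$p$-th means with the sublevel set $S_\e := \{x \in X : M_p(f,A)(x) \le \underline M_p(f,A) + \e\}$. Continuity of $M_p(f,A)$ makes $S_\e$ closed, convexity makes it convex, coercivity via \eqref{eq:coer_p} (or \eqref{eq:coer_infty} when $p = \infty$) gives a uniform bound on $\|x\|$, and the definition of infimum provides non-emptiness. The set of $p$-th means is then $\bigcap_{\e > 0} S_\e$, inheriting closedness, convexity, and boundedness.

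For part (c), I would take any minimizing sequence $(x_n) \subset X$ for $M_p(f,A)$. Coercivity forces $(x_n)$ to be bounded, hence contained in some multiple of the unit ball of $X$; $w^*$-sequential compactness then extracts a subsequence with $x_{n_k} \xrightarrow[]{w^*} x^*$. The decisive step is $w^*$-sequential lower semicontinuity of $M_p(f,A)$: for each $w \in A$, $f(w) - x_{n_k} \xrightarrow[]{w^*} f(w) - x^*$, and the dual norm is $w^*$-sequentially lower semicontinuous, so $\|f(w) - x^*\|^p \le \liminf_k \|f(w) - x_{n_k}\|^p$. Fatou's lemma then yields $M_p(f,A)(x^*) \le \liminf_k M_p(f,A)(x_{n_k}) = \underline M_p(f,A)$, so $x^*$ is a $p$-th mean.

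Part (d) is the most delicate. Suppose by contradiction that $x \ne y$ are both $p$-th means. Convexity of $M_p(f,A)$ forces $z = (x+y)/2$ also to be a $p$-th mean, and then equality must hold a.e.\ on $A$ in both pointwise steps of the chain \eqref{eq:convex pth mean}: the triangle inequality $\|(u+v)/2\| \le (\|u\|+\|v\|)/2$ (where $u = f(w)-x$, $v = f(w)-y$) and the convexity inequality $((\|u\|+\|v\|)/2)^p \le (\|u\|^p + \|v\|^p)/2$. Strict convexity of $t \mapsto t^p$ for $p > 1$ immediately forces $\|u\| = \|v\|$, i.e., $\|f(w)-x\| = \|f(w)-y\|$ a.e.\ on $A$. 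Passing from this equidistance to $x = y$ is the main obstacle and is where the two alternative hypotheses enter. If the norm on $X$ is strictly convex, the equality case of the triangle inequality together with $\|u\| = \|v\|$ forces $u/\|u\| = v/\|v\|$ at a.e.\ $w \in A$ where both are nonzero (the exceptional set being negligible since $x \ne y$ cannot both equal $f(w)$), hence $u = v$ and $x = y$. If instead the full-support condition on $\mu \circ f^{-1}$ holds, then $f(w)$ lies a.e.\ on the bisector $H = \{\zeta \in X : \|\zeta - x\| = \|\zeta - y\|\}$; choosing $\rho < \|x-y\|/2$ makes $B(x,\rho)$ disjoint from $H$, while the hypothesis forces $\mu(f^{-1}(B(x,\rho))) > 0$, a contradiction. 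In either case the assumption $x \ne y$ fails, proving uniqueness.
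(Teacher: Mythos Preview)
Your proof is correct and follows essentially the same route as the paper's: sublevel sets of the convex, continuous, coercive function $M_p(f,A)$ for (a)--(b); a bounded minimizing sequence plus $w^*$-sequential compactness and Fatou for (c); and for (d), forcing equality a.e.\ in the convexity chain \eqref{eq:convex pth mean} to get $\|f(w)-x\|=\|f(w)-y\|$ on $A$, then concluding via strict convexity of the norm or via a small ball around $x$ that misses the bisector. The only cosmetic differences are that the paper phrases the strictly convex step as ``three points on a sphere'' rather than equality in the triangle inequality, and uses $\rho=\tfrac13\|x-y\|$ with an explicit triangle-inequality computation in place of your bisector argument.
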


\begin{proof}
$(a)$ By definition, for every $\e>0$ the set $C_\e=\{x \hbox{ is an $\e$-$p$-th mean}\}$ is nonempty.
On the other hand $C_\e$ is convex and closed, because $M_p(f,A)$ is convex and continuous. 
Since $M_p(f,A)$ is coercive we conclude $C_\e$ is bounded (see \eqref{eq:coer_p} and \eqref{eq:coer_infty}). 

$(b)$ The set $C_0=\cap_{\e>0} C_\e$ is the set of $p$-th means for $f$ in $A$. Therefore, it is closed
convex and bounded. 

$(c)$ Assume that $(x_n)_n$ is a $1/n$-$p$-th mean. 
We consider a subsequence $(x_{n_k})_k$ 
that converges in the $w^*$-topology. By Fatou's Theorem we have 
$$
\int_A \lim\inf_k \|f(w)-x_{n_k}\|^p \ \mu(dw)\le \lim\inf_k \int_A \|f(w)-x_{n_k}\|^p \ \mu(dw)=\underline M_p(f,A),
$$
and for all $w$ it holds $\|f(w)-x\|^p\le \lim\inf_k \|f(w)-x_{n_k}\|^p$ showing that $x$ is a $p$-th mean and 
$C_0$ is not empty. 

$(d)$ According to \eqref{eq:convex pth mean}, if $x,y$ are two possible $p$-th means 
and $0<\alpha<1$ then $\alpha x+(1-\alpha)y$ is also a $p$-th mean and we have equalities
in \eqref{eq:convex pth mean}.  Then, for almost all $w\in A$ it holds
$$
\|f(w)-x\|=\|f(w)-y\|, 
$$
because the $p$-th power is strictly convex. Also, we get from \eqref{eq:convex pth mean} that
for almost all $w\in A$ it holds
$$
\|f(w)-(\alpha x+(1-\alpha)y\|=\alpha \|f(w)-x\|+(1-\alpha)\|f(w)-y\|=\|f(w)-x\|=\|f(w)-y\|,
$$
which is not possible when $x\neq y$ if we used the strictly convex property of the norm in 
$X$ to the sphere $\{z\in X: \|f(w)-z\|=r\}$ with $r=\|f(w)-x\|=\|f(w)-y\|$. This shows that $x=y$.
\medskip 

Now, we assume $f$ and $\mu$ satisfy for all $z\in X$ and all $\rho>0$ 
$$
\mu(f^{-1}(B(z,\rho)))>0.
$$
If we assume $x\neq y$, we take $\rho=\frac13 \|x-y\|>0$. Then, for almost all $w \in  f^{-1}(B(x,\rho))$ we have
$\|f(w)-x\|< 1/3 \|x-y\|=\rho$ and
$$
\|x-y\|\le \|f(w)-x\|+\|f(w)-y\|< 1/3 \|x-y\|+\|f(w)-y\|,
$$
showing that
$$
\|f(w)-y\|\ge 2/3 \|x-y\|\ge 2\rho> 2 \|f(w)-x\|,
$$
which is a contradiction, and therefore $x=y$.
\end{proof}

\begin{remark} If $\Omega$ is a topological space with the Borel $\sigma$-field $\mathcal B(\Omega)$,
$\mu$ a Radon measure on $\mathcal B(\Omega)$ and $X$ a dual Banach space. Suppose that $f:\Omega\to X$ is lower-semicontinuous. Then the $p$-th mean of $f$ exists. In fact, by Proposition 7.12 in \cite{Folland}, the monotone convergence theorem, and then Fatou's lemma, holds for nets of positive lower-semicontinuous functions. Following the argument of the previous proof, since the unit ball of $X$ is $w^*$-compact, we deduce that $f$ admits a $p$-th mean.
\end{remark}

\begin{corollary}
\label{cor:charac_pmean}
Let $(\Omega,\F,\mu)$ be a measure space. Assume that $X$ is a Banach space with a G\^ateaux differentiable 
norm $\|\hspace{0.05cm}\|$. Let $f\in L^p(X)$ with $1<p<\infty$ and $A\in\F$ such that $0<\mu(A)<\infty$. We assume that $x$ is $p$-th mean of $f$ in $A$. 
Then, for all $v\in X$, the real function
$$
R(a)=M_p(f,A)(x+av)=\int_A \|f(w)-(x+av)\|^p \ \mu(dw),
$$ 
is convex and differentiable, with a minimum at $a=0$, for which
$$
0=R'(0)=\int_A p \|f(w)-x\|^{p-1} \partial(\|f(w)-x)\|)(v)\ \mu(dw).
$$
\end{corollary}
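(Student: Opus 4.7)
The plan is to verify the three claims about $R$ in turn. Convexity is essentially free: $R$ is the restriction of $M_p(f,A)$ to the affine line $\{x+av:a\in\RR\}$, and convexity of $M_p(f,A)$ is given by \eqref{eq:convex pth mean}. The hypothesis that $x$ is a $p$-th mean says precisely that $M_p(f,A)(x)\le M_p(f,A)(x+av)$ for every $a\in\RR$, so $a=0$ is a minimum of $R$.

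The substantive work is differentiability together with the formula for $R'(0)$, and the natural approach is to differentiate under the integral sign via dominated convergence. For each $w\in A$ with $f(w)\ne x$, Gâteaux differentiability of the norm at $f(w)-x$ combined with the chain rule shows that $a\mapsto \|f(w)-x-av\|^p$ is differentiable at $a=0$, with derivative equal (up to sign) to $p\|f(w)-x\|^{p-1}\partial\|f(w)-x\|(v)$; on the (possibly non-negligible) set $\{f=x\}$, the map $z\mapsto\|z\|^p$ is still Gâteaux differentiable at the origin since $p>1$, with derivative zero, in agreement with the announced formula. Hence the pointwise derivative of the integrand at $a=0$ exists for every $w\in A$.

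The main obstacle is producing an integrable dominating function, for which I would exploit the convexity of $a\mapsto\|f(w)-x-av\|^p$. Since difference quotients of a convex real function are monotone in $a$, for $|a|\le 1$ the quantity $|a^{-1}(\|f(w)-x-av\|^p-\|f(w)-x\|^p)|$ is bounded by the absolute values of the difference quotients at $a=\pm 1$, and these are in turn controlled by $(\|f(w)-x\|+\|v\|)^p+\|f(w)-x\|^p$. Because $f\in L^p(X)$ and $\mu(A)<\infty$, this bound is $\mu$-integrable on $A$. The dominated convergence theorem then gives differentiability of $R$ at $0$ together with the claimed formula for $R'(0)$, and the same argument applied at an arbitrary base point $a_0\in\RR$ yields differentiability on all of $\RR$. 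Finally, $R'(0)=0$ is immediate: a convex differentiable function attaining its minimum at an interior point of its domain has vanishing derivative there.
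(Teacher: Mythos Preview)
Your argument is correct and matches the paper's approach: both dispose of convexity and the location of the minimum immediately, then establish differentiability by differentiating under the integral sign via the dominated convergence theorem, using G\^ateaux differentiability of the norm (and $p>1$ for the case $f(w)=x$) to compute the pointwise limit. The only cosmetic difference is the dominating function: the paper uses the mean-value estimate $|b^p-a^p|\le p(a\vee b)^{p-1}|b-a|$ together with $\big|\|f(w)-x-tv\|-\|f(w)-x\|\big|\le |t|\,\|v\|$ to arrive at $p(\|f(w)-x\|+1)^{p-1}$, whereas you exploit monotonicity of the difference quotients of the convex map $a\mapsto\|f(w)-x-av\|^p$ to obtain $(\|f(w)-x\|+\|v\|)^p+\|f(w)-x\|^p$; both bounds are in $L^1(A)$ for the same reason ($f\in L^p$ and $\mu(A)<\infty$).
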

\begin{proof} Clearly $R$ is a continuous convex function. Let us prove that $R$ is differentiable. This
will follows from the dominated convergence theorem. Since the norm is G\^ateaux differentiable, for every $z\neq 0$, we have
the existence of 
$$
\partial(\|z\|)(v)=\lim\limits_{t\to 0} \frac{\|z+tv\|-\|z\|}{t}.
$$
showing that 
$$
\lim\limits_{t\to 0} \frac{\|z+tv\|^p-\|z\|^p}{t}=p\|z\|^{p-1}\partial(\|z\|)(v)
$$
Moreover, for $z=0$
$$
\lim\limits_{t\to 0} \frac{\|0+tv\|^p-\|0\|^p}{t}=0,
$$
showing that the function $\|z\|^p$ has a G\^ateaux derivative at any $z$.
Now, we need to show a domination, for that we use, for $a\ge 0, b\ge 0$
$$
|b^p-a^p|=p\left|\int_a^b s^{p-1} ds\right|\le p(a\vee b)^{p-1} |b-a|,
$$
which for $a=\|f(w)-x\|, b=\|f(w)-(x+tv)\|$ gives, using that $b\le a+|t|$ and $|b-a|\le |t|$, and for $|t|\le 1$,
$$
\left|\frac{\|f(w)-(x+tv)\|^p-\|f(w)-x\|^p}{t}\right|\le p(\|f(w)-x\|+1)^{p-1},
$$
which integrable using H\"older's inequality. This shows that $R$ is differentiable at $a=0$ and
$$
R'(0)=\int_A p \|f(w)-x\|^{p-1} \partial(\|f(w)-x)\|)(v)\ \mu(dw).
$$
Similarly it is shown that $R$ is differentiable and 
$$
R'(a)=\int_A p \|f(w)-(x+av)\|^{p-1} \partial(\|f(w)-(x+av))\|)(v)\ \mu(dw).
$$
Since $R$ has a minimum at $a=0$ it follows that $R'(0)=0$.
\end{proof}

\section*{Acknowledgements}

G. Grelier was supported by the grant PID2021-122126NB-C32 funded by MCIN/AEI/10.13039/ 501100011033 and by ``ERDF A way of making Europe'', by the European Union and by the grant 21955/PI/22 funded by Fundaci\'on S\'eneca Regi\'on de Murcia; and by MICINN 2018 FPI fellowship with reference PRE2018-083703, associated to grant MTM2017-83262-C2-2-P. J. San Mart\'in was partially founded by Basal FB 210005 ANID.

\end{document}